\theoremstyle{plain}
\newtheorem{thm}{Theorem}[section]
\newtheorem{cor}[thm]{Corollary}
\newtheorem{lem}[thm]{Lemma}
\theoremstyle{definition}
\newtheorem{defn}[thm]{Definition}
\theoremstyle{remark}
\newtheorem{rem}[thm]{Remark}
\newtheorem*{ex}{Example}
\numberwithin{equation}{section}
\begin{document}

%
%
%
%
%
%
%
%
%

\def \al {{\alpha}}
\def \T {{\mathbb T}}
\def \D {{\mathbb D}}
\def \C {{\mathbb C}}
\def \R {{\mathbb R}}
\def \N {{\mathbb N}}
\def \Z {{\mathbb Z}}

\title[Invariant subspaces and sub-Hilbert spaces]{Helson-Lowdenslager and de Branges type theorems in the setting of continuous rotationally symmetric norms}

\author[A. Singh]{Apoorva Singh}

\address{Department of Mathematics\\
School of Natural Sciences\\
Shiv Nadar Institution of Eminence (Deemed to be University)\\
Gautam Buddha Nagar - 201314.\\
Uttar Pradesh, India}

\email{as721@snu.edu.in}

\author[N. Sahni]{Niteesh Sahni}
\address{Department of Mathematics\br
School of Natural Sciences\br
Shiv Nadar Institution of Eminence (Deemed to be University)\br
Gautam Buddha Nagar - 201314.\br
Uttar Pradesh, India}
\email{niteesh.sahni@snu.edu.in}
\subjclass{Primary 47A15; Secondary 30H10, 47B38 }

\keywords{ Rotationally symmetric norm, Simply invariant subspace, Lebesgue space, Hardy space, Sub-Hilbert spaces.}

\begin{abstract}
A Helson-Lowdenslager type result has been proved by Chen \cite{chen2017general} in the context of Lebesgue spaces of the unit circle equipped with a continuous rotationally symmetric norm by studying the simply invariant subspaces of the operator of multiplication by the coordinate function $z$. In this paper, we generalize Chen's result by obtaining a description of simply invariant subspaces for multiplication by $z^n$. A de Branges type result is also proved for Hardy spaces equipped with continuous rotationally symmetric norms. 

\end{abstract}

\maketitle
\section{Introduction}\label{sec1}

Two problems in the theory of analytic functions on the unit disk that have been a center of rigorous research are Beurling's theorem \cite{Beurling1948} which characterizes subspaces of the Hardy space $H^2$ that are invariant under $T_z$-- the operator of multiplication by the coordination function $z$, and de Branges' theorem \cite{de2015square} which characterizes contractively contained sub-Hilbert spaces in $H^2$ that are also invariant under $T_z$. 
\par
Beurling's theorem was later generalized by Helson and Lowdenslager \cite{invariant_subspaces_Helson} in form of obtaining the simply invariant subspaces of $T_z$ on the Lebesgue space $L^2$ of the unit circle. The doubly invariant subspaces of $L^2$ under the operator $T_z$ were obtained by Weiner \cite{helson2017lectures}. The simplicity of the arguments allowed for the generalizations to all $L^p$ spaces ($0< p \leq \infty$) \cite{Srinivasan1963simply,Srinivasan1964doubly}. Ever since, the Helson--Lowdenslager theorem has been proved in many different settings. For example, on the torus, subspaces of $L^2$ invariant under doubly commuting isometries have been obtained in \cite{Mandrekar1988Polydisc}, and further extended to all $L^p$ spaces in \cite{Redett2005Polydisc}. The vector-valued extensions of the Beurling and the Helson--Lowdenslager theorems obtained by Lax \cite{lax1959translation} and Halmos \cite{Halmos1961Shifts} were significant breakthroughs which sparked off a significant body of research in this direction. Motivated by the results of Lax and Halmos, invariance under the operator $T_{z^n}$ on $H^p$ and $L^p$ spaces have been studied in \cite{sahni2012lax, SnehLata2010finite}. In fact,  \cite{sahni2012lax} studies invariance under $T_B$-- the multiplication by a finite Blaschke factor $B(z)$ and also the common invariant subspaces of $T_{B^2}$ and $T_{B^3}$ on all $H^p$ spaces. 
\par
Coming to the study of sub-Hilbert spaces of Lebesgue spaces, it was established in \cite{Paulsen2001deBrangesL2} that a general de-Branges type result is not possible in $L^2$. However in \cite{Paulsen2001deBrangesL2}, a characterization is obtained under some conditions on the norm of the sub-Hilbert space and they went on to prove a characterization for $L^p$ ($p>2$). This result was extended to other $L^p$ spaces ($1 \leq p < 2$) in \cite{Redett2005SubLebesgue}. Redett's result was further generalized to all $L^p$ spaces in the context of the operator $T_{z^n}$ in \cite{SnehLata2010finite}.
\par
Recently in \cite{Chen2014Thesis}, Chen introduced a class of norms $\al$ over all measurable functions on the unit circle called the continuous rotationally symmetric norm, and defined the general Lebesgue and Hardy spaces associated with $\al$ denoted by $L^\alpha$ and $H^\alpha$, respectively. The space $L^\al$ is the $\al$-closure of $L^\infty$ and $H^\al$ is the $\al$-closure of $H^\infty$. The classical $p$-norms $\|.\|_p$ $(1 \leq p < \infty$) are particular examples of continuous rotationally symmetric norms. Interestingly, \cite{chen2017general} introduced a more general class of norms called the $\|.\|_1$-dominating normalized gauge norms on measurable functions on the unit circle and proved the Helson-Lowdenslager type result for the operator $T_z$ on Lebesgue spaces equipped with such norms. In fact, doubly invariant subspaces have also been studied in this general setting. For $H^\al$ under a $\|.\|_1$-dominating gauge norm, invariance under multiplication by a finite Blaschke factor has been studied in \cite{singh2022multiplication}. Further, sharp descriptions for invariant as well as common invariant subspaces under the rotationally symmetric norms have also been presented in \cite{singh2022multiplication}. 
\par
The aim of the present paper is two-fold. First, the Beurling type results of \cite{singh2022multiplication} have been extended to $L^\al$ - the Lebesgue space equipped with a continuous rotationally symmetric norm. In particular, we describe the closed subspaces of $L^\al$ which are simply invariant under the operator $T_{z^n}$. We point out that the invariance under $T_{z^n}$ on $L^p$ was first studied in \cite{SnehLata2010finite}, wherein, the invariant subspaces are first characterized for $L^2$ and then density arguments are given to lift the result to all $L^p$ spaces. The Wold decomposition plays a crucial role in the characterization for $L^2$ in addition to the orthogonal properties of $n$-unimodular functions. These fundamental properties of $n$-unimodular functions will be used in the proof for the general $L^\al$ case as well.  The arguments presented in \cite{chen2017general} for establishing a form for the $T_z$ simply invariant subspaces in $L^\al$ make use of the fact that an $L^\infty$ function $f$, satisfying $1/f \in L^\al$, can be factored into an $L^\infty$ function $g$ and an $H^\infty$ function $h$ such that $1/h\in H^\al$. Our proof is elementary in the sense that it does not rely on such factorization and the arguments work for the operator $T_{z^n}$. We define an $n$-unimodular matrix on tuple of $L^\infty$ functions and make use of the decomposition of  any $L^\al$ function into a direct sum of $L^\al(z^n)$ functions, defined as the $\al$-closure of $L^\infty(z^n)$, to prove our main theorem in section~\ref{sec3}.  This result generalizes the Helson--Lowdenslager's type theorems of  \cite{SnehLata2010finite}, \cite{chen2017general} and \cite{singh2022multiplication}. 
\par
Secondly, in section~\ref{sec4}, we examine the de Branges case in $H^\al$, that is, Hilbert spaces algebraically contained in $H^\al$ on which $T_z$ acts as an isometry. Our result generalizes the de Branges type result of \cite{Agrawal1995deBranges} in $H^p$ spaces.

\section{Notations and Preliminaries}\label{sec2}

Suppose $\D$ is the open unit disk and $\T$ is the unit circle on the complex plane $\C$. Let $m$ be a normalized Lebesgue measure on $\T$. The Lebesgue space $L^\infty$ consists of the essentially bounded complex valued measurable functions on $\T$, such that it is a Banach space under the essential supremum norm. We define a rotationally symmetric norm on $L^\infty$ as:
\begin{defn}(\cite{Chen2014Thesis})
Let $\al$ be a norm on $L^\infty$. $\al$ is a rotationally symmetric norm if:
\begin{enumerate}
    \item  $\al(1)=1$,
    \item $\al(|f|) = \al(f)$ for every $f \in L^\infty$, and
    \item $\al(f_w) = \al(f)$ for every $w \in \T$ and $f \in L^\infty$.
\end{enumerate}
Here $f_w : \T \to \C$ is defined as $f_w(z) = f(\overline{w}z)$.
\end{defn}
Moreover, the rotationally symmetric norm can also be extended over all complex valued measurable functions on $\T$ as
\begin{center}
$\alpha(f) $ = sup $\{ \alpha(s) : s $ is a simple function, $|s| \leq |f| \}$.
\end{center}
Furthermore, we say $\al$ is continuous if, for a sequence of measurable sets $\{E_n\}_{n=1}^\infty$,  
$m(E_n) \rightarrow 0^+$ we have $\al(\chi_{E_n}) \rightarrow 0.$ Here $\chi_{E_n}$ is the characteristic function on the set $E_n \subset \T$.
\par
One of the important examples of continuous rotationally symmetric norms, besides the $p$-norms ($1 \leq p < \infty$), is an Orlicz norm. We shall first define an Orlicz norm on $L^\infty$ and prove that it is a continuous rotationally symmetric norm. 
\par
Recall the definition of an Orlicz function. A non-decreasing convex function $\psi : [0, \infty] \to [0,\infty]$ is an Orlicz function such that $\psi(0)=0$ and $\psi(\infty)= \infty$. We additionally assume that $\psi$ is continuous at $0$, $\psi$ is a strictly increasing surjective function,
and $\lim_{x\to\infty} \frac{\psi(x)}{x} = \infty$. For a detailed study on Orcliz functions and their associated norms, we refer to \cite{Rao1991Theory}. 
\begin{ex}
One of the examples of a continuous rotationally symmetric norm is an Orlicz norm which is defined on $L^\infty$  as:
$$ \|f\|_\psi = \inf \left \{ \lambda > 0 : \frac{1}{\psi(1)}\int_{\T} \psi \left( \frac{\lvert f\rvert}{\lambda} \right) dm \leq 1 \right \}$$
where $\psi$ is an Orlicz function and $f \in L^\infty$.
\end{ex}
Below we justify that $\|.\|_\psi$ is a continuous rotationally symmetric norm on $L^\infty$.\\
We first show that the $\|.\|_\psi$ norm is well-defined on $L^\infty$. Whenever $f =0$, then $\psi(0)=0$ and $\|0\|_\psi=0$. Consider $0 \ne f \in L^\infty$, and observe that $\lvert f \rvert \leq \|f\|_\infty$ $a.e.$ We can write $\frac{\lvert f \rvert}{\|f\|_\infty} \leq 1$ $a.e.$, such that $\frac{1}{\psi(1)}\int_{\T} \psi \left( \frac{\lvert f\rvert}{\|f\|_\infty} \right) dm \leq 1$. Therefore, $\|f\|_\psi \leq \|f\|_\infty$.
\par
Now we prove that $\|.\|_\psi$ is a rotationally symmetric norm. Clearly, $\|1\|_\psi = \inf \left \{ \lambda > 0 : \frac{1}{\psi(1)}\int_{\T} \psi \left( \frac{ 1}{\lambda}  \right) dm \leq 1 \right \}$= $\inf \left \{ \lambda > 0 : \psi \left( \frac{ 1}{\lambda}  \right) \leq \psi(1) \right \}$. Since $\psi$ is strictly increasing on $[0,\infty]$, implies $\psi$ is a bijective function, therefore $\lambda \geq 1$ and $\|1\|_\psi = 1$. It is easy to show that $\|\lvert f \rvert \|_\psi = \|f\|_\psi$, and by changing the variable, it follows that $\|f_w\|_\psi = \|f\|_\psi$ for every $w \in \T$.
\par
Lastly, for the continuity of $\|.\|_\psi$,  suppose $\{E_n\}_{n=1}^\infty$ is a sequence of measurable sets in $\T$ such that $m(E_n) \to 0^+$ as $n \to \infty$.\\
We shall prove that $\|\chi_{E_n}\|_\psi \to 0$ as $n \to \infty$.\\
Since $m(E_n) \to 0^+$, it can be easily seen that $\chi_{E_n} \to 0$ $a.e.$ as $n \to \infty$. For any $\lambda > 0$, $\dfrac{ \chi_{E_n}}{\lambda} \to 0~a.e.$ It is well known that every real-valued convex function on an open interval is continuous. So, $\psi$ is continuous which gives $\psi \left ( \dfrac{ \chi_{E_n}  }{\lambda} \right ) \to 0~a.e.~\forall~\lambda > 0.$ So, for a given $\epsilon > 0$, there exists $K \in \N$ such that 
$$ \psi \left ( \dfrac{ \chi_{E_n}  }{\lambda} \right ) \leq \epsilon~a.e.~\forall~n \geq K.$$
This implies, 
\begin{align*}
 \dfrac{1}{\psi(1)} \int\limits_\T \psi \left ( \dfrac{\chi_{E_n} }{\lambda} \right ) dm &\leq \dfrac{1}{\psi(1)} \int\limits_\T \epsilon~dm  \\
 &\leq \dfrac{\epsilon}{\psi(1)}~\forall~n \geq K.
\end{align*}
By choosing $\epsilon = \psi(1)$, we have 
$$ \dfrac{1}{\psi(1)} \int\limits_\T \psi \left ( \dfrac{ \chi_{E_n}}{\lambda} \right ) dm \leq 1 ~\forall~n \geq K~\text{for all}~\lambda > 0.$$
Let $\lambda_k = \frac{1}{2^k}$ be a sequence such that $\lambda_k \in \left \{\lambda > 0 : \frac{1}{\psi(1)}\int_{\T} \psi \left( \frac{\chi_{E_n}}{\lambda} \right) dm \leq 1 \right \}$ for each $k \in \N$ and for all $n \geq K$. Hence, it follows that $\|\chi_{E_n}\|_\psi$ = 0 for all $n \geq K$.

The space $\mathcal{L}^\al = \{ f : \T \rightarrow \C $ measurable such that $\al(f) < \infty $\} is a Banach space under the continuous rotationally symmetric norm  $\al$. The $\al$-closure of $L^\infty$ is the general Lebesgue space $L^\al$. Some important facts about $L^\al$ and $\mathcal{L}^\al$ spaces established in \cite{Chen2014LebesgueAH} are that $L^\infty \subset L^\al \subset \mathcal{L}^\al \subset L^1$, and $||.||_1 \leq \al(.) \leq ||.||_\infty$. The space $L^\infty$ multiplies $L^\al$ back into $L^\al$ and the following inequality is satisfied: $\al(fg) \leq \|f\|_\infty \al(g)$ for all $f \in L^\infty$ and $g \in L^\al$. In fact this inequality holds for all $g\in \mathcal{L}^{\al}$.
\par
\smallskip
The $\al$-closure of $H^\infty(\T)$ is denoted by $H^\al$, and it is a closed subspace of $L^\al$. The general Hardy space $H^\al$ is a Banach space under the $\al$ norm. A simpler description of $H^\al$ has been proved in \cite{Chen2014LebesgueAH}, which is $H^\al = H^1 \cap L^\al$. Furthermore, note that $H^\infty \subset H^\al \subset H^1$.
\par
\smallskip
As we know $L^\al \subset L^1$, we can identify $f \in L^\al$ with a Fourier series
$$ f(z) = \sum_{j=- \infty}^{\infty} \hat{f}(j) z^j$$
where the Fourier coefficients are given by  $\hat{f}(j) = \int\limits_\T f(z) z^{-j} dm $, for all $j \in \Z$.\\
One of the characterizations of $H^\al$ obtained in \cite{Chen2014LebesgueAH} is:
$$ H^\al = \{ f \in L^\al \; : \; \hat{f}(j) = \int_{\T} f z^{-j} dm= 0,\;  \forall \; j < 0 \}.$$ 
Consider the $n$th Cesaro means of $f \in L^\al$, for each $n \geq 1$
\[ \sigma_n(f) = \frac{S_0(f) + S_1(f) +\cdots+ S_n(f)}{n+1} \] 
where $ S_k(f)$ stands for the $k$-th partial sum $\sum\limits_{j=-k}^{k} \hat{f}(j) z^j$, for $k \geq 0$.\\
The fact that $\sigma_n(f)$ converges to $f$ in $L^\al$ has been proved in \cite{Chen2014LebesgueAH}. For the convenience of the reader, we reproduce the result below. 

\begin{lem}\label{lemma2.2}
Let $\al$ be a continuous rotationally symmetric norm and $f\in L^\al$. Then $\al(\sigma_n(f) - f ) \to 0$ as $n \to \infty$, and also  $L^\al$ is the $\al$-closure of span$\{z^n : n \in \Z \}$.
\end{lem}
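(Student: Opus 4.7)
My plan is to deduce both assertions from three ingredients: (i) $C(\T)$ is $\al$-dense in $L^\al$; (ii) the uniform bound $\al(\sigma_n(f)) \le \al(f)$ on $L^\al$; (iii) $\al(\sigma_n(f) - f) \to 0$ for $f \in C(\T)$. Given these, a three-$\epsilon$ argument handles a general $f \in L^\al$: choose $g \in C(\T)$ with $\al(f - g) < \epsilon$, and estimate
$\al(\sigma_n(f) - f) \le \al(\sigma_n(f - g)) + \al(\sigma_n(g) - g) + \al(g - f) \le 2 \al(f - g) + \al(\sigma_n(g) - g),$
which is eventually below $3\epsilon$ by (ii) and (iii). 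Because each $\sigma_n(f)$ lies in $\mathrm{span}\{z^k : |k| \le n\}$, this simultaneously identifies $L^\al$ with the $\al$-closure of $\mathrm{span}\{z^k : k \in \Z\}$.

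For (i), since $L^\infty$ is $\al$-dense in $L^\al$ by definition, it is enough to approximate each $f \in L^\infty$ by a continuous function. Given $\epsilon > 0$, Lusin's theorem produces a closed $F \subset \T$ with $m(\T \setminus F) < \epsilon$ on which $f$ is continuous, and Tietze's theorem yields $g \in C(\T)$ agreeing with $f$ on $F$ with $\|g\|_\infty \le \|f\|_\infty$. Then $|f - g| \le 2 \|f\|_\infty \chi_{\T \setminus F}$, so the $L^\infty$--$\al$ inequality together with the continuity of $\al$ give $\al(f - g) \le 2\|f\|_\infty \al(\chi_{\T \setminus F}) \to 0$. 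Step (iii) is immediate from the classical Fej\'er theorem: for $f \in C(\T)$ we have $\sigma_n(f) \to f$ uniformly, and $\al \le \|\cdot\|_\infty$ upgrades this to $\al$-convergence.

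The heart of the proof, and the main obstacle, is (ii), since a general Minkowski integral inequality is not directly available from the axioms of a continuous rotationally symmetric norm. My workaround uses the Fej\'er kernel representation $\sigma_n(f)(z) = \int_\T K_n(w) f_w(z) \, dm(w)$, with $K_n \ge 0$ and $\int_\T K_n \, dm = 1$, to exhibit $\sigma_n(f)$ as a continuous convex combination of rotates of $f$. For $f \in C(\T)$, I approximate the integral by Riemann sums $R_N = \sum_{i=1}^N c_i^N f_{w_i^N}$ obtained from equal-arc partitions $\{A_i^N\}$ of $\T$ with $c_i^N = \int_{A_i^N} K_n \, dm$ and $w_i^N \in A_i^N$. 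Uniform continuity of $f$ gives $\|\sigma_n(f) - R_N\|_\infty \to 0$, while $c_i^N \ge 0$, $\sum_i c_i^N = 1$, and $\al(f_w) = \al(f)$ yield $\al(R_N) \le \al(f)$; hence $\al(\sigma_n(f)) \le \al(f)$ for continuous $f$. To pass to general $f \in L^\al$, choose $f_k \in C(\T)$ with $\al(f_k - f) \to 0$ via (i); the smoothing estimate $\|\sigma_n(h)\|_\infty \le \|K_n\|_\infty \|h\|_1 \le \|K_n\|_\infty \al(h)$ (using $\|\cdot\|_1 \le \al$) shows that $\al(\sigma_n(f_k) - \sigma_n(f)) \to 0$, so $\al(\sigma_n(f)) \le \liminf_k \al(f_k) = \al(f)$, completing (ii).
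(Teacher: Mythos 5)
Your proposal is correct, and it is worth noting that the paper itself does not prove this lemma at all: it simply records the statement and cites Chen, where the standard route is to first establish that rotation acts continuously on $L^\al$ (i.e.\ $\al(f_w-f)\to 0$ as $w\to 1$) and then treat $\sigma_n(f)=\int_\T K_n(w)f_w\,dm(w)$ via a vector-valued integral or a Minkowski-type integral inequality for $\al$. Your argument reaches the same two conclusions by a more hands-on discretization: you prove the Fej\'er bound $\al(\sigma_n(f))\le\al(f)$ for continuous $f$ by writing the convolution as a uniform limit of finite convex combinations $\sum_i c_i^N f_{w_i^N}$ of rotates (where rotation invariance and the triangle inequality suffice), extend it to all of $L^\al$ using the crude smoothing bound $\|\sigma_n(h)\|_\infty\le\|K_n\|_\infty\al(h)$ for fixed $n$, and then run the standard three-$\epsilon$ argument from the $\al$-density of $C(\T)$, which you obtain from Lusin--Tietze together with the continuity of $\al$ and the inequality $\al(fg)\le\|f\|_\infty\al(g)$ already recorded in Section~2. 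This buys a self-contained, elementary proof that uses only the axioms of a continuous rotationally symmetric norm and the stated inequalities $\|\cdot\|_1\le\al\le\|\cdot\|_\infty$, avoiding both vector-valued integration and any Minkowski inequality for $\al$; the only cosmetic point to tidy is that in the Lusin step you should first redefine $f$ on a null set so that $|f|\le\|f\|_\infty$ everywhere before invoking Tietze with the bound $\|g\|_\infty\le\|f\|_\infty$, and note that monotonicity of $\al$ on the relevant difference follows from the multiplier inequality since $f-g$ vanishes off $\T\setminus F$. Since each $\sigma_n(f)$ is a trigonometric polynomial, the density of $\mathrm{span}\{z^k:k\in\Z\}$ follows exactly as you say.
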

In order to obtain a characterization of simply invariant subspaces of $L^\al$ invariant under the multiplication by $z^n$, we require a decomposition of $L^\al$ in terms of the space $L^\al(z^n)$, defined as the $\al$ closure of $L^\infty(z^n)$. A similar decomposition has been proved in the context of $H^\al$ in \cite{singh2022multiplication}. Proceeding in a similar fashion as in the proof of Lemma 4.2 in \cite{singh2022multiplication}, we obtain a decomposition of $L^\al$ as follows.
\begin{lem}\label{lemma2.3}
Suppose $\al$ is a continuous rotationally symmetric norm on $L^\al$. Then
\begin{equation}\label{eq2.1}
L^\al =  L^\al(z^n) \oplus z L^\al(z^n) \oplus\cdots\oplus z^{n-1}  L^\al(z^n),
\end{equation}
where $\oplus$ is an algebraic direct sum.
\end{lem}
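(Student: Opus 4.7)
The plan is to follow the blueprint of Lemma 4.2 in \cite{singh2022multiplication}, adapted from $H^\al$ to $L^\al$. The containment $z^k L^\al(z^n)\subset L^\al$ for $0\le k\le n-1$ is essentially automatic: $L^\infty(z^n)\subset L^\infty\subset L^\al$, and since multiplication by bounded functions sends $L^\al$ into $L^\al$ with $\al(fg)\le \|f\|_\infty\al(g)$, the $\al$-closure $L^\al(z^n)$ lies inside $L^\al$, and multiplication by the unimodular function $z^k$ preserves $L^\al$.

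The crux is to build the decomposition of an arbitrary $f\in L^\al$. The key tool is the averaging projection associated to the cyclic group of order $n$. Let $\omega=e^{2\pi i/n}$ and define
\[
(P_k f)(z)=\frac{1}{n}\sum_{j=0}^{n-1}\omega^{-jk}\,f(\omega^j z),\qquad 0\le k\le n-1.
\]
Because $\al$ is rotationally symmetric, each summand $f(\omega^j z)=f_{\omega^{-j}}(z)$ satisfies $\al(f_{\omega^{-j}})=\al(f)$, whence $\al(P_k f)\le \al(f)$; hence $P_k$ is a bounded operator on $L^\al$. A direct check on monomials shows $P_k(z^m)=z^m$ when $m\equiv k\pmod n$ and $P_k(z^m)=0$ otherwise, so the Fourier series of $P_k f$ is supported on the residue class $k\pmod n$, and $z^{-k}P_kf$ has Fourier series supported on multiples of $n$. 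Applying this to the Ces\`aro means $\sigma_N(f)$, which are trigonometric polynomials tending to $f$ in $L^\al$ by Lemma~\ref{lemma2.2}, one sees that $z^{-k}P_k\sigma_N(f)$ is a trigonometric polynomial in $z^n$, hence lies in $L^\infty(z^n)$, and by boundedness of $P_k$ converges in $\al$-norm to $z^{-k}P_k f$. Thus $f_k:=z^{-k}P_kf\in L^\al(z^n)$, and
\[
f=\sum_{k=0}^{n-1}P_kf=\sum_{k=0}^{n-1}z^kf_k,
\]
since the trigonometric identity $\sum_{k=0}^{n-1}P_k=\mathrm{Id}$ holds on monomials and extends to $L^\al$ by density.

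For directness, assume $\sum_{k=0}^{n-1}z^k g_k=0$ with $g_k\in L^\al(z^n)$. Every element of $L^\infty(z^n)$ has Fourier coefficients supported on $n\mathbb Z$, and this property is preserved under $L^1$-limits (since $L^\al\subset L^1$ and $\hat{(\cdot)}(j)$ is continuous on $L^1$), so each $g_k$ has Fourier support in $n\mathbb Z$, and $z^k g_k$ has Fourier support in $k+n\mathbb Z$. These residue classes being pairwise disjoint, the vanishing of the total sum forces $z^kg_k=0$, hence $g_k=0$, for each $k$.

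The main obstacle is the second paragraph: one must ensure that the naive block-by-residue-class decomposition of a polynomial survives the passage to $L^\al$-limits. The rotational symmetry axiom of $\al$ is exactly what is needed to make $P_k$ contractive, which is why the argument works in the generality of continuous rotationally symmetric norms rather than requiring the stronger $\|\cdot\|_1$-dominating gauge setting.
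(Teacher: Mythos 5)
Your argument is correct and complete, and it is essentially the argument the paper has in mind: the paper gives no details, deferring to Lemma 4.2 of \cite{singh2022multiplication}, whose $H^\al$ proof rests on the same ingredients you use — the residue-class (rotation-averaging) projections made $\al$-contractive by rotational symmetry, Ces\`aro means from Lemma~\ref{lemma2.2} to land in $L^\al(z^n)$, and $\|\cdot\|_1\le\al(\cdot)$ plus Fourier-coefficient continuity to get directness. No gaps.
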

For a fixed $n \in \N$, the operator $T_{z^n}$ of multiplication by a monomial $z^n$ on $L^\al$ acts as an isometry. It can be verified as $\al(T_{z^n}(f)) = \al(z^n f) = \al(|z^n f|) = \al(|f|) = \al(f)$. A closed subspace $\mathcal{M}$ of $L^\al$ is simply invariant under $T_{z^n}$ if $z^n \mathcal{M} \subsetneq \mathcal M$, and is doubly invariant if $z^n \mathcal M = \mathcal M$. 
\par
Let us now introduce the definition of an $n$-unimodular matrix associated with an $r$-tuple of $L^\infty$ functions. It is a direct analogue of the definition of the $n$-inner matrix associated with an $r$-tuple of $H^\infty$ functions introduced in \cite{singh1997multiplication}.
\par
Let $(\varphi_1, \varphi_2,\ldots, \varphi_r)$ to be an $r$-tuple of $L^\infty$ functions $(r \leq n)$. For each $1 \leq j \leq r$, we can write $\varphi_j = \sum\limits_{i=1}^{n} z^{i-1} \; \varphi_{ji}$ where $\varphi_{ji} \in L^2(z^n)$. 
\par
Next, we define the $ r \times n$ matrix $A = (\varphi_{ji})$ for $1 \leq j \leq r$, $1 \leq i \leq n$, and call it the $n$-unimodular matrix associated with the tuple $(\varphi_1, \varphi_2,\ldots, \varphi_r)$, if
 $A A^*  =I$ almost everywhere. 
 
In particular, a function $\varphi \in L^\infty$  is $n$-unimodular if $\sum\limits_{i=1}^{n} \lvert \varphi_{i} \lvert ^2 = 1 \; a.e.$, where $ \varphi = \sum\limits_{i=1}^{n} z^{i-1} \; \varphi_{i}$ and $\varphi_i \in L^2(z^n)$. Observe that $\lvert \varphi_i \lvert^2 \leq 1 \; a.e.$ and hence $\varphi_i \in L^\infty(z^n)$. 

We will require a characterization for the associated matrix $A$ to be $n$-unimodular. This characterization is presented in Lemma \ref{lemma2.4} and is similar to the characterization of the $B$-inner matrix presented in \cite{singh1997multiplication}. We point out that the proof of the necessary part is very similar to that presented in \cite{singh1997multiplication}. However, the converse carries a different set of arguments. 

\begin{lem}\label{lemma2.4}
The matrix associated with an $r$-tuple $(\varphi_1, \varphi_2,\ldots, \varphi_r)$ of $L^\infty$ functions is $n$-unimodular if and only if $\{z^{kn} \varphi_j : k \in \Z,\;1 \leq j \leq r\}$ is an orthonormal set in $L^2$.
\end{lem}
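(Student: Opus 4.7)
The plan is to reduce the statement to a single Fourier-coefficient computation, exploiting that each $\varphi_{ji}$ has Fourier spectrum supported in $n\Z$.

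First, applying the $L^2$ instance of the decomposition in Lemma~\ref{lemma2.3} (legitimate since $L^\infty\subset L^2$), I would write $\varphi_j=\sum_{i=1}^{n}z^{i-1}\varphi_{ji}$ with $\varphi_{ji}\in L^2(z^n)$, and then expand the $L^2$ inner product as
\[
\langle z^{kn}\varphi_j,\,z^{ln}\varphi_p\rangle \;=\; \sum_{i,i'=1}^{n}\int_{\T} z^{(k-l)n+i-i'}\,\varphi_{ji}\overline{\varphi_{pi'}}\,dm
\]
for each $k,l\in\Z$ and $1\le j,p\le r$.

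The key observation is that, since the Fourier series of each $\varphi_{ji}$ is supported on $n\Z$, so is the Fourier series of the $L^1$ product $\varphi_{ji}\overline{\varphi_{pi'}}$; multiplying by $z^{(k-l)n+i-i'}$ shifts this support to $n\Z+(i-i')$. The integral over $\T$ extracts the zero Fourier coefficient, which is nonzero only when $0\in n\Z+(i-i')$; as $|i-i'|\le n-1$, this forces $i=i'$. Hence all cross terms drop and the identity reduces to
\[
\langle z^{kn}\varphi_j,\,z^{ln}\varphi_p\rangle \;=\; \int_{\T} z^{(k-l)n}\,(AA^*)_{jp}\,dm,
\]
where $(AA^*)_{jp}=\sum_{i=1}^{n}\varphi_{ji}\overline{\varphi_{pi}}\in L^1(z^n)$.

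Both directions now fall out of this master identity. For necessity, if $AA^*=I$ a.e., then $(AA^*)_{jp}=\delta_{jp}$ and the right-hand side equals $\delta_{jp}\delta_{kl}$, giving the claimed orthonormality. For sufficiency, the hypothesis reads $\int_{\T}z^{mn}(AA^*)_{jp}\,dm=\delta_{jp}\delta_{m,0}$ for every $m\in\Z$; since $(AA^*)_{jp}$ is a priori supported on $n\Z$, this pins down \emph{all} of its Fourier coefficients, and the uniqueness of $L^1$ Fourier expansions forces $(AA^*)_{jp}=\delta_{jp}$ a.e.

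I do not expect a genuine obstacle — the substantive point is the Fourier-support cancellation that kills the off-diagonal ($i\neq i'$) contributions. The step that needs the most care is the converse: $L^2$-orthonormality alone only controls the Fourier coefficients of $(AA^*)_{jp}$ at indices in $n\Z$, and it is precisely the a priori support property $(AA^*)_{jp}\in L^1(z^n)$ that upgrades this to the pointwise identity $AA^*=I$ almost everywhere.
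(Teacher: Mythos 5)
Your proof is correct and follows essentially the same route as the paper: both rest on extracting Fourier coefficients of $\sum_{i}\varphi_{ji}\overline{\varphi_{pi}}$, using that functions in $L^2(z^n)$ have spectrum in $n\Z$ so that the off-diagonal ($i\neq i'$) terms and all coefficients off $n\Z$ vanish, which is exactly the computation the paper performs for the converse. The only difference is organizational: your single master identity also delivers the necessity direction explicitly (and spells out the $L^1$ Fourier-uniqueness step), whereas the paper handles necessity by citing the analogous argument of Lemma 3.9 in \cite{singh1997multiplication}.
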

\begin{proof}
Write $\varphi_j = \sum\limits_{i=1}^{n} z^{i-1} \; \varphi_{ji}$ for each $1 \leq j \leq r$ where $\varphi_{ji} \in L^2(z^n)$. Assume that $A = (\varphi_{ji})$ is an $n$-unimodular matrix. So, we have
\begin{equation}\label{eq2.3}
\sum\limits_{i=1}^{n} \lvert \varphi_{ji} \lvert ^2 = 1 \; a.e., \; \text{for every} \; \; 1 \leq j \leq r
\end{equation}
and
\begin{equation}\label{eq2.4}
\sum\limits_{i=1}^{n} \varphi_{ji} \overline{\varphi_{ki}} = 0 \; a.e. , \; \; \text{for}\; j \neq k, \; \text{and} \; 1 \leq j,k \leq r.
\end{equation}

The fact $\{z^{kn} \varphi_j : k \in \Z,\;1 \leq j \leq r\}$ is orthonormal in $L^2$ can be verified by following the arguments similar to those in the proof of Lemma 3.9 in \cite{singh1997multiplication}.

Conversely, let $\{z^{kn} \varphi_j : k \in \Z,\;1 \leq j \leq r\}$ be an orthonormal set in $L^2$. We shall show that $A=(\varphi_{ji})$ satisfies conditions (\ref{eq2.3}) and (\ref{eq2.4}). \\
 Write $\varphi_j = \sum\limits_{i=1}^{n} z^{i-1} \varphi_{ji}$. A simple calculation shows that $ \int\limits_\T \sum\limits_{i=1}^{n} \lvert\varphi_{ji}\lvert^2 dm = 1$ and $ \int\limits_\T \sum\limits_{i=1}^{n} \lvert\varphi_{ji}\lvert^2 z^{-kn} dm =0$ for all $k \neq 0$. Therefore, $\sum\limits_{i=1}^{n} \lvert\varphi_{ji}\lvert^2 = 1 \; a.e.$ This validates (\ref{eq2.3}). 

Note that when $j \neq l$ and $k \in \Z$, we have
\begin{equation*}
    \begin{split}
        0 &= \langle \varphi_j, z^{kn} \varphi_l \rangle  \\
        &= \sum\limits_{i=1}^{n} \langle \varphi_{ji}, z^{kn} \varphi_{li} \rangle \\
        &=   \left \langle  \sum\limits_{i=1}^{n} \varphi_{ji} \overline{\varphi_{li}}, z^{kn} \right \rangle 
    \end{split}
\end{equation*} 
Hence $\sum\limits_{i=1}^{n} \varphi_{ji} \overline{\varphi_{li}} = 0 \; a.e.$. This completes the proof of the lemma. 
\end{proof}

\section{Simply Invariant Subspaces of $L^\al$}\label{sec3}

In order to describe the simply invariant subspace $\mathcal M$ of $L^\al$ under the operator $T_{z^n}$, we shall first observe that $\mathcal M$ has a non trivial intersection with $L^\infty$ and that $\mathcal M\cap L^\infty$ is a weak*-closed subspace of $L^\infty$. We borrow the description of $T_{z^n}$--simply invariant subspaces of $L^\infty$ from \cite{SnehLata2010finite} and record it below for the convenience of the reader. The rest of the arguments in the proof of the main theorem are elementary and rely on the fact (Lemma \ref{lemma3.6}) that for any $f\in \mathcal M$ there exists an outer function $O$ such that $Of\in \mathcal M\cap L^\infty$.

\begin{thm}[\cite{SnehLata2010finite}]\label{thm3.1}
Let $\mathcal{M}$ be a weak*-closed subspace of $L^\infty$ which is simply invariant under $T_{z^n}$. Then the most general form of $\mathcal{M}$ is 
\[ \mathcal{M} = \sum\limits_{j=1}^{r} \oplus \varphi_j H^\infty(z^n) \oplus \mathcal{K}^{\overline{A}}_{\mathcal{M}}\;, \]
where \\
(a) Each $\varphi_j$ is an $n$-unimodular function such that $r \leq n$, \\
(b) $\overline{A} = (\overline{\varphi_{ji}}) \in M_{rn}(L^2(z^n))$, $\varphi_j = \sum\limits_{i=1}^{n} z^{i-1} \varphi_{ji}$,\\
(c) \;$\mathcal{K}^{\overline{A}}_{\mathcal{M}}$ = $\left \{ f \in \mathcal{M}: \sum\limits_{i=1}^{n} \overline{\varphi_{ji}}f_i = 0 \; a.e. \; \forall \;  1 \leq j \leq r \right \}$, where $f = \sum\limits_{i=1}^{n} z^{i-1} f_i$ and $f_i \in L^2(z^n)$.\\
Moreover, $\mathcal{K}^{\overline{A}}_{\mathcal{M}}$ is a doubly invariant subspace of $L^\infty$ under $T_{z^n}$. When $r = n$, $\mathcal{K}^{\overline{A}}_{\mathcal{M}} = \{0\}$. If $r < n$, then there exist infinitely many non-zero doubly invariant subspaces of $\mathcal{K}^{\overline{A}}_{{L}^\infty}$ when appended with $\sum\limits_{j=1}^{r} \oplus \varphi_j H^\infty(z^n)$ form a simply invariant subspace of $L^\infty$.
\end{thm}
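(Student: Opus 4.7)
The plan is to reduce to a Wold-type analysis in $L^2$ and then lift everything back to $L^\infty$ via finite roots-of-unity averaging. Concretely, I would let $\tilde{\mathcal{M}}$ denote the $L^2$-closure of $\mathcal{M}$. Since $\mathcal{M}$ is weak*-closed and $z^n\mathcal{M}\subsetneq\mathcal{M}$, one checks that $z^n\tilde{\mathcal{M}}\subsetneq\tilde{\mathcal{M}}$, so the Wold decomposition applies to the isometry $T_{z^n}$ on $\tilde{\mathcal{M}}$:
\[
\tilde{\mathcal{M}} = \bigoplus_{k\geq 0} z^{kn}\mathcal{W} \,\oplus\, \mathcal{N}, \quad \mathcal{W}=\tilde{\mathcal{M}}\ominus z^n\tilde{\mathcal{M}},\quad \mathcal{N}=\bigcap_{k\geq 0}z^{kn}\tilde{\mathcal{M}}.
\]

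The wandering subspace $\mathcal{W}$ is the engine of the proof. For any $\varphi,\psi\in\mathcal{W}$, the orthogonality $\langle z^{kn}\varphi,\psi\rangle=0$ for $k\neq 0$, combined with the decomposition $\varphi=\sum_{i=1}^n z^{i-1}\varphi_i$, $\psi=\sum_{i=1}^n z^{i-1}\psi_i$ from Lemma \ref{lemma2.3}, yields the pointwise identity $\sum_i \varphi_i\overline{\psi_i}=\langle\varphi,\psi\rangle_{L^2}$ a.e. Choosing an orthonormal basis $\{\varphi_1,\ldots,\varphi_r\}$ of $\mathcal{W}$, the matrix $A=(\varphi_{ji})$ satisfies $AA^*=I$ a.e., which forces $r\leq n$, confines each $\varphi_{ji}$ to the unit ball of $L^\infty(z^n)$, and shows $(\varphi_1,\ldots,\varphi_r)$ is $n$-unimodular in the sense of Lemma \ref{lemma2.4}. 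In particular each $\varphi_j\in L^\infty$.

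Next, for $f\in\mathcal{M}$ I would compute $h_j\in H^2(z^n)$ so that $f-\sum_j\varphi_j h_j$ lies in $\mathcal{N}$. Projecting onto $\{z^{kn}\varphi_j\}_{k\geq 0}$, one obtains $h_j$ as the $L^2(z^n)$-part of $f\overline{\varphi_j}$. The critical observation is that this part is explicitly recovered by averaging over $n$-th roots of unity:
\[
h_j(z) = \frac{1}{n}\sum_{l=0}^{n-1} f(\omega^l z)\,\overline{\varphi_j(\omega^l z)},\qquad \omega=e^{2\pi i/n},
\]
which is bounded whenever $f$ and $\varphi_j$ are. Hence $h_j\in H^\infty(z^n)$, and $k:=f-\sum_j\varphi_j h_j$ lies in $L^\infty\cap\mathcal{M}$. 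A direct Fourier computation then shows that the condition $k\perp z^{ln}\varphi_j$ for all $l,j$ is equivalent to $\sum_i \overline{\varphi_{ji}}k_i=0$ for each $j$; that is, $k\in\mathcal{K}^{\overline{A}}_{\mathcal{M}}$.

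The remaining assertions follow quickly: the defining conditions of $\mathcal{K}^{\overline{A}}_\mathcal{M}$ are preserved under multiplication by $z^{\pm n}$ since each $k_i\in L^\infty(z^n)$, so $\mathcal{K}^{\overline{A}}_{\mathcal{M}}$ is doubly invariant. When $r=n$, the matrix $A$ is pointwise unitary, forcing each $k_i=0$ and hence $\mathcal{K}^{\overline{A}}_{\mathcal{M}}=\{0\}$. When $r<n$, any completion of $A$ to a pointwise unitary furnishes $n-r$ extra rows that generate infinitely many doubly invariant subspaces of $\mathcal{K}^{\overline{A}}_{L^\infty}$, which when joined with $\sum_{j=1}^r\oplus\varphi_j H^\infty(z^n)$ form simply invariant subspaces of $L^\infty$. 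I expect the main obstacle to be justifying compatibility of the $L^2$-level Wold decomposition with the $L^\infty$ and weak* structure of $\mathcal{M}$; this is exactly what the roots-of-unity averaging handles, as it provides a bounded, $L^\infty$-respecting projection onto each $L^\infty(z^n)$-summand.
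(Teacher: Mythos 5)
First, note that the paper does not actually prove Theorem \ref{thm3.1}: it is imported verbatim from \cite{SnehLata2010finite} (only Remark \ref{rem3.2} is justified here), so your proposal can only be measured against what a complete proof must contain. Your $L^2$-level skeleton is the standard one and several of your computations are sound: the pointwise identity $\sum_i \varphi_{ji}\overline{\varphi_{ki}}=\delta_{jk}$ a.e.\ for an orthonormal basis of the wandering subspace, the bound $r\leq n$, boundedness of the coefficients $h_j$ via the roots-of-unity average, the equivalence of $k\perp z^{ln}\varphi_j$ for all $l\in\Z$ with $\sum_i\overline{\varphi_{ji}}k_i=0$ a.e., and the collapse $\mathcal{K}^{\overline{A}}_{\mathcal{M}}=\{0\}$ when $r=n$.

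The genuine gaps are exactly at the points where the weak*-hypothesis has to be used, and your argument never uses it materially. (i) The assertion ``one checks that $z^n\tilde{\mathcal{M}}\subsetneq\tilde{\mathcal{M}}$'' is not automatic: if the $L^2$-closure were doubly invariant you would need to pull that back to $\mathcal{M}$, which requires something like $\tilde{\mathcal{M}}\cap L^\infty=\mathcal{M}$; this is false for general weak*-closed subspaces of $L^\infty$ (the kernel of integration against a fixed $g\in L^1\setminus L^2$ is weak*-closed but its $L^2$-closure is all of $L^2$), so it must be derived from the invariance structure, and you do not. (ii) More seriously, the wandering vectors $\varphi_j$ live in $\tilde{\mathcal{M}}$, not a priori in $\mathcal{M}$; nothing in the proposal shows $\varphi_j\in\mathcal{M}$, hence neither $\varphi_j h_j\in\mathcal{M}$, nor $k=f-\sum_j\varphi_j h_j\in\mathcal{M}$, nor the inclusion $\varphi_j H^\infty(z^n)\subseteq\mathcal{M}$ that the stated equality requires. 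The roots-of-unity averaging only gives boundedness of the coefficients; it does not produce membership in the weak*-closed space $\mathcal{M}$. Since the conclusion fails without weak*-closedness (for $n=1$ the norm-closed but not weak*-closed simply invariant subspace given by the disc algebra is not of the form $\varphi H^\infty$), any correct proof must exploit that hypothesis substantively --- e.g.\ by a bounded-sequence weak* approximation (Krein--Smulian type argument) establishing $\mathcal{M}=\tilde{\mathcal{M}}\cap L^\infty$, or by dualizing through the preannihilator in $L^1$ --- and this is missing. A smaller gap: in the $r<n$ case, the measurable completion of $A$ to a pointwise unitary matrix function, and the passage from such completions to ``infinitely many'' admissible doubly invariant summands, are asserted rather than argued.
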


\begin{rem}\label{rem3.2}
The construction of the characterization of simply $T_{z^n}$-invariant subspaces of $L^p$ ($0 < p \leq \infty$) described in \cite{SnehLata2010finite}, shows that the $r \times n$ matrix $A = (\varphi_{ji})$ associated with the tuple $(\varphi_1,\ldots,\varphi_r)$ is $n$-unimodular. 
\end{rem}
We justify Remark \ref{rem3.2} as follows:\\
The proof of Theorem A \cite{SnehLata2010finite} reveals that $\varphi_1,\ldots \varphi_r$ appearing in the statement of Theorem \ref{thm3.1} are orthonormal in $L^2$. Further, $z^{nm} \varphi_j \perp z^{nl} \varphi_k$ in $L^2$ for all $1 \leq j, k \leq r$ and $m, l \in \Z$. This leads to the fact that $\{ z^{kn} \varphi_j : k \in \Z, 1 \leq j \leq r\}$ is an orthonormal set in $L^2$. Now by Lemma \ref{lemma2.4}, we have $AA^*=I$.  
\par
Let us now state the main result of this section which helps us to describe the simply invariant subspaces of $L^\al$ invariant under $T_{z^n}$. 

\begin{thm}\label{thm3.3}
Let $\al$ be a continuous rotationally symmetric norm and $\mathcal{M}$ be a non-trivial closed subspace of $L^\al$ which is simply invariant under $T_{z^n}$. Then there exist $n$-unimodular functions $\varphi_1$,\ldots,$\varphi_r$ ($r \leq n$) such that
\[ \mathcal{M} = \sum\limits_{j=1}^{r} \oplus \varphi_j H^\al(z^n) \oplus \mathcal{K}^{\overline{A}}_{\mathcal{M}} \]
where (a) $H^\al(z^n) := \overline{H^\infty(z^n)}^{\al}$, \\
(b) A = $(\varphi_{ji})$ is an $r \times n$ matrix and each $\varphi_j = \sum\limits_{i=1}^{n} z^{i-1} \varphi_{ji}$ such that $\varphi_{ji} \in L^2(z^n)$,\\
(c) \;$\mathcal{K}^{\overline{A}}_{\mathcal{M}}$ = $\left \{ f \in \mathcal{M}: \sum\limits_{i=1}^{n} \overline{\varphi_{ji}}f_i = 0 \; a.e., \; \forall \;  1 \leq j \leq r \right \}$, where $f = \sum\limits_{i=1}^{n} z^{i-1} f_i$ and $f_i \in L^2(z^n)$.
\end{thm}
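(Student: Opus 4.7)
The plan is to reduce the assertion to the $L^\infty$ characterization of Theorem~\ref{thm3.1} by working with $\mathcal M_\infty := \mathcal M \cap L^\infty$, and then to transfer the resulting decomposition to all of $\mathcal M$ by $\alpha$-density. First, I would show that $\mathcal M_\infty$ is nontrivial (Lemma~\ref{lemma3.6} produces an outer $O\in H^\infty$ with $Of\in \mathcal M_\infty$ for any nonzero $f\in\mathcal M$), weak$^*$-closed in $L^\infty$ (a bounded weak$^*$-convergent net in $\mathcal M_\infty$ converges $\alpha$-dominatedly to its limit, via Egorov together with the $\alpha$-absolute continuity $\al(\chi_{E_n} g)\to 0$ for $g\in L^\al$ and $m(E_n)\to 0$; hence the limit lies in the $\alpha$-closed subspace $\mathcal M$), and simply invariant under $T_{z^n}$ (inherited from $\mathcal M$ by a standard contradiction argument). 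Theorem~\ref{thm3.1} then supplies $n$-unimodular functions $\varphi_1,\ldots,\varphi_r$ with
\[
\mathcal M_\infty = \sum_{j=1}^r \oplus \varphi_j H^\infty(z^n) \oplus \mathcal K^{\overline A}_{\mathcal M_\infty},
\]
where $A=(\varphi_{ji})$ is $n$-unimodular by Remark~\ref{rem3.2} and Lemma~\ref{lemma2.4}.

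To produce the decomposition for a general $f\in\mathcal M$, I would invoke Lemma~\ref{lemma2.3} to write $f=\sum_{i=1}^n z^{i-1} f_i$ with $f_i\in L^\al(z^n)$, and define the candidate coefficients
\[
g_j := \sum_{i=1}^n \overline{\varphi_{ji}}\, f_i \in L^\al(z^n), \qquad k := f - \sum_{j=1}^r \varphi_j g_j.
\]
The $n$-unimodularity relations $\sum_i \overline{\varphi_{li}}\varphi_{ji}=\delta_{lj}$ immediately yield $\sum_i \overline{\varphi_{li}}(f_i - \sum_j \varphi_{ji} g_j)=0$, so that $k$ satisfies the kernel condition of $\mathcal K^{\overline A}_{L^\al}$. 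To see that $g_j\in H^\al(z^n)$, it suffices to check the vanishing of the Fourier coefficient $\int g_j z^{kn}\,dm = \int \overline{\varphi_j}\, f\, z^{kn}\,dm$ for every $k\ge 1$. This functional on $\mathcal M$ is $\alpha$-continuous (since $\|\cdot\|_1\le\al(\cdot)$); it vanishes on $\mathcal M_\infty$ by the $L^2$-orthonormality of $\{z^{kn}\varphi_j : k\in\Z,\ 1\le j\le r\}$ combined with the decomposition of $\mathcal M_\infty$ above; hence it vanishes on the $\alpha$-closure of $\mathcal M_\infty$. The membership $k\in\mathcal M$ then follows once $\varphi_j g_j\in\mathcal M$ is established, which in turn uses the $H^\infty(z^n)$-invariance of $\mathcal M$ (a Ces\`aro-plus-$\alpha$-dominated-convergence consequence of $T_{z^n}$-invariance) together with $\alpha$-approximation of $g_j\in H^\al(z^n)$ by elements of $H^\infty(z^n)$.

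The principal obstacle is the $\alpha$-density of $\mathcal M_\infty$ in $\mathcal M$, which is needed to propagate the vanishing of the Fourier functional above. My proposed tool is to symmetrize the outer function of Lemma~\ref{lemma3.6}: setting $\Psi(z) := \prod_{i=0}^{n-1} O(\omega^i z)$ with $\omega=e^{2\pi i/n}$ yields a function that lies in $H^\infty(z^n)$, is outer there, and satisfies $\Psi f\in\mathcal M_\infty$. Density would then follow from an $\alpha$-norm approximation $q_N \Psi \to 1$ with $q_N\in H^\infty(z^n)$, producing $q_N\Psi f\to f$ in $\mathcal M_\infty$. Establishing this outer-function approximation theorem in the $H^\al$ setting is the technical heart of the argument, and is where the specific features of a continuous rotationally symmetric norm (in particular, the $\alpha$-absolute continuity of the integral and the density results available in $H^\al$) enter in an essential way.
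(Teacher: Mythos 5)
Your first half coincides with the paper's: show $\mathcal{M}\cap L^\infty$ is nontrivial (Lemma~\ref{lemma3.6}), weak*-closed (Lemma~\ref{lemma3.4}) and simply invariant, apply Theorem~\ref{thm3.1}, and note via Remark~\ref{rem3.2} that $A$ is $n$-unimodular. The divergence is in how the $L^\infty$ decomposition is transferred to all of $\mathcal{M}$, and it is exactly there that your argument has a genuine gap: everything you do for a general $f\in\mathcal{M}$ hinges on the claim that $\mathcal{M}\cap L^\infty$ is $\al$-dense in $\mathcal{M}$, so that the $\al$-continuous functionals $f\mapsto\int_\T \overline{\varphi_j}\,f\,z^{kn}\,dm$ vanish on $\mathcal{M}$ because they vanish on $\mathcal{M}\cap L^\infty$. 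You do not prove this density; you only sketch a route (a symmetrized outer function $\Psi$ with $\Psi f\in\mathcal{M}\cap L^\infty$, plus an approximation $q_N\Psi\to 1$ with $q_N\in H^\infty(z^n)$ strong enough to give $q_N\Psi f\to f$ in $\al$-norm) and yourself label it the unestablished ``technical heart.'' Making it work requires constructing the $q_N$ (e.g.\ outer functions with boundary modulus $\min(1/|\Psi|,N)$), controlling boundary convergence, and a dominated-convergence principle for $\al$ on $L^\al$; none of this is supplied, and it is precisely the kind of bounded/outer approximation machinery whose avoidance the paper states is the point of its ``elementary'' proof (Chen's argument, which the paper is trying not to reuse, runs through a closely related factorization with $1/h\in H^\al$). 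Incidentally, the symmetrization $\Psi(z)=\prod_{i}O(\omega^i z)$ is unnecessary: the outer function built in Lemma~\ref{lemma3.6} is a product of functions of $z^n$, hence already lies in $H^\infty(z^n)$.

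The paper closes the same step without any density claim. For $f\in\mathcal{M}$ it takes the outer $O\in H^\infty(z^n)$ with $Of\in\mathcal{M}\cap L^\infty$, writes $Of=\varphi_1h_1+\cdots+\varphi_rh_r+K$ with $h_j\in H^\infty(z^n)$, $K\in\mathcal{K}^{\overline{A}}_{\mathcal{M}\cap L^\infty}$, and compares the $L^\al(z^n)$-components from Lemma~\ref{lemma2.3}: since $O$ is a function of $z^n$, the $i$-th component of $Of$ is $Of_i$, which yields the pointwise bound $\bigl\lvert\sum_{j}\varphi_{ji}\tfrac{h_j}{O}+\tfrac{K_i}{O}\bigr\rvert\le |f_1|+\cdots+|f_n|$, hence membership in $L^\al$; the $n$-unimodularity relations (together with the kernel condition on $K$) then separate out $\tfrac{h_j}{O}\in L^\al$ and $\tfrac{K_i}{O}\in L^\al$, outerness of $O$ gives $\tfrac{h_j}{O}\in H^1$, so $\tfrac{h_j}{O}\in H^\al$, and Ces\`aro means (Lemma~\ref{lemma2.2}) place it in $H^\al(z^n)$; finally $\tfrac{K}{O}=f-\sum_j\varphi_j\tfrac{h_j}{O}\in\mathcal{K}^{\overline{A}}_{\mathcal{M}}$. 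If you want to salvage your route, you must either prove the density lemma you postulate or replace it by this divide-by-$O$ estimate; as written, the proposal does not constitute a proof.
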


The following two lemmas, Lemma \ref{lemma3.4} and Lemma \ref{lemma3.5}, have been proved in \cite{singh2022multiplication} in the context of $ H^\al$. However, using similar arguments we obtain the $L^\al$ versions presented below.

\begin{lem}\label{lemma3.4}
Suppose $\al$ is a continuous rotationally symmetric norm and $\mathcal{M}$ be a closed subspace of $L^\al$. Then $ \mathcal{M} \cap L^\infty $ is weak*-closed in $L^\infty$.
\end{lem}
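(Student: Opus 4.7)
The plan is to combine the Krein--Smulian theorem with Mazur's theorem and a dominated convergence argument adapted to $\al$. Write $\mathcal{N}=\mathcal{M}\cap L^\infty$; it is a convex subset of $L^\infty$ since $\mathcal{M}$ is a subspace of $L^\al$. By Krein--Smulian, to show $\mathcal{N}$ is weak*-closed it suffices to check that the intersection $\mathcal{N}_r:=\mathcal{N}\cap\{f\in L^\infty:\|f\|_\infty\leq r\}$ is weak*-closed for every $r>0$. Since $L^1(\T)$ is separable, the weak*-topology restricted to any norm ball of $L^\infty$ is metrizable, so I only need to show $\mathcal{N}_r$ is sequentially weak*-closed.

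So take $\{f_k\}\subset\mathcal{N}_r$ with $f_k\to f$ in the weak* topology of $L^\infty$; automatically $\|f\|_\infty\leq r$. Because $L^2(\T)\subset L^1(\T)$, the weak* convergence forces $f_k\to f$ weakly in $L^2$, and Mazur's theorem (applied in $L^2$) then yields finite convex combinations $g_N=\sum_{k}c_{N,k}f_k$ with $\|g_N-f\|_2\to 0$. Each $g_N$ lies in $\mathcal{M}$ (as $\mathcal{M}$ is a linear subspace) and satisfies $\|g_N\|_\infty\leq r$; passing to a subsequence I may assume $g_N\to f$ almost everywhere, with the uniform bound $|g_N-f|\leq 2r$.

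The remaining and most delicate step is to upgrade this almost-everywhere convergence to $\al(g_N-f)\to 0$, i.e., a bounded convergence theorem for $\al$. Given $\varepsilon>0$, continuity of $\al$ supplies $\delta>0$ with $\al(\chi_E)<\varepsilon$ whenever $m(E)<\delta$. By Egorov's theorem pick $E\subset\T$ with $m(E)<\delta$ on whose complement $g_N\to f$ uniformly. Splitting $g_N-f=(g_N-f)\chi_{\T\setminus E}+(g_N-f)\chi_E$ and using $\al(\cdot)\leq\|\cdot\|_\infty$ on the first piece together with $\al((g_N-f)\chi_E)\leq 2r\,\al(\chi_E)<2r\varepsilon$ on the second yields $\al(g_N-f)\to 0$. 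Since $\mathcal{M}$ is $\al$-closed and each $g_N\in\mathcal{M}$, it follows that $f\in\mathcal{M}\cap L^\infty=\mathcal{N}$, establishing weak*-closedness of $\mathcal{N}_r$ and hence, by Krein--Smulian, of $\mathcal{N}$.

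The main obstacle I anticipate is the final bounded convergence step, but the continuity hypothesis on $\al$ together with Egorov's theorem handles it in a standard way; the remainder of the argument is routine functional analysis.
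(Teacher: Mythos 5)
Your argument is correct and complete: Krein--Smulian reduces the problem to the bounded case, metrizability of the weak* topology on balls of $L^\infty$ justifies working with sequences, Mazur plus passage to an a.e.\ convergent subsequence and the Egorov/continuity-of-$\al$ splitting (using $\al(\cdot)\leq\|\cdot\|_\infty$ and $\al(h\chi_E)\leq\|h\|_\infty\al(\chi_E)$) upgrades the convergence to $\al$-convergence, and $\al$-closedness of $\mathcal{M}$ finishes the proof. This is essentially the same route as the proof the paper relies on (it cites the $H^\al$ version from the earlier work, which proceeds via exactly this Krein--Smulian and dominated-convergence-type argument for continuous rotationally symmetric norms), so no further changes are needed.
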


\begin{lem}\label{lemma3.5}
Let $\al$ be a continuous rotationally symmetric norm. Suppose $\mathcal{M}$ is a closed subspace of $L^\al$. Then, $\mathcal{M}$ is simply invariant under $T_{z^n}$ if and only if $\mathcal{M}$ is simply invariant under the algebra $H^\infty(z^n)$. 
\end{lem}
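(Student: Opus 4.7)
The plan is to handle the reverse direction for free and focus on showing that $T_{z^n}$-invariance upgrades to full $H^\infty(z^n)$-invariance. If $\mathcal{M}$ is (simply) invariant under the algebra $H^\infty(z^n)$, it is automatically (simply) invariant under $T_{z^n}$ since $z^n\in H^\infty(z^n)$; the strict inclusion $z^n\mathcal{M}\subsetneq \mathcal{M}$ transfers unchanged. So the real content is: given $h\in H^\infty(z^n)$ and $f\in\mathcal{M}$, I must produce $hf\in\mathcal{M}$. The simplicity is then automatic because in both formulations it reduces to the same condition $z^n\mathcal{M}\subsetneq \mathcal{M}$.

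My strategy is to approximate $h$ by its Cesaro means, exploiting the fact that $h\in H^\infty(z^n)\subset L^\infty\subset L^\al$ has a Fourier series supported on non-negative multiples of $n$. Hence the partial sums $p_N:=\sigma_N(h)$ from Lemma~\ref{lemma2.2} are polynomials in $z^n$ satisfying $\|p_N\|_\infty\leq \|h\|_\infty$ (Fejer kernel estimate). Iterating the hypothesis $T_{z^n}\mathcal{M}\subseteq \mathcal{M}$ gives $z^{kn}\mathcal{M}\subseteq \mathcal{M}$ for every $k\geq 0$, so $p_N f\in \mathcal{M}$ for every $f\in \mathcal{M}$. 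Since $\mathcal{M}$ is $\al$-closed, it suffices to prove $\al(p_Nf - hf) \to 0$.

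The main obstacle is exactly this convergence, because $p_N\to h$ only in $\al$-norm (Lemma~\ref{lemma2.2}), not in $\|\cdot\|_\infty$, so the bound $\al(gf)\leq \|g\|_\infty \al(f)$ cannot be applied directly with $g=p_N-h$. I would sidestep this by exploiting that $L^\infty$ is $\al$-dense in $L^\al$: given $\varepsilon>0$, pick $\tilde f\in L^\infty$ with $\al(f-\tilde f)<\varepsilon$ and split
\[
\al\bigl((p_N - h)f\bigr)\;\leq\; \al\bigl((p_N - h)(f-\tilde f)\bigr)\,+\,\al\bigl((p_N - h)\tilde f\bigr).
\]
The first term is at most $\|p_N-h\|_\infty\,\al(f-\tilde f)\leq 2\|h\|_\infty\,\varepsilon$. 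For the second term I swap the roles, pulling the bounded factor $\tilde f$ out via the module inequality of Section~\ref{sec2}, so that $\al((p_N-h)\tilde f)\leq \|\tilde f\|_\infty\,\al(p_N-h)$, which tends to $0$ by Lemma~\ref{lemma2.2} applied to $h\in L^\al$. Letting $N\to\infty$ first and then $\varepsilon\to 0^+$ yields $\al(p_Nf - hf)\to 0$, and closedness of $\mathcal{M}$ forces $hf\in\mathcal{M}$, completing the forward implication.

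I expect essentially the whole technical difficulty of the lemma to be concentrated in that split-estimate convergence; the rest is bookkeeping. In particular, no structure theory for $\mathcal{M}$ is used here (this lemma is a prerequisite for the structure theorem), and the only tools needed beyond elementary manipulation are Lemma~\ref{lemma2.2} and the multiplicative inequality $\al(gf)\leq \|g\|_\infty\al(f)$ recorded in Section~\ref{sec2}.
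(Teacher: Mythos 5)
Your proof is correct. Note that the paper itself does not display a proof of Lemma~\ref{lemma3.5}; it only records that the $H^\al$ analogue from \cite{singh2022multiplication} carries over by similar arguments. Your argument is a valid, self-contained proof along the expected skeleton: the reverse implication is immediate since $z^n\in H^\infty(z^n)$, and for the forward implication you approximate $h\in H^\infty(z^n)$ by its Ces\`aro means $\sigma_N(h)$, which are analytic polynomials in $z^n$, satisfy $\|\sigma_N(h)\|_\infty\le\|h\|_\infty$ by the Fej\'er kernel, lie in $\mathcal{M}\cdot$-preserving position because $z^{kn}\mathcal{M}\subset\mathcal{M}$, and converge to $h$ in $\al$ by Lemma~\ref{lemma2.2}; closedness of $\mathcal{M}$ then yields $hf\in\mathcal{M}$. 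The one step where you genuinely deviate from the usual treatment in this literature is the upgrade from $\al(\sigma_N(h)-h)\to 0$ to $\al\bigl((\sigma_N(h)-h)f\bigr)\to 0$ for $f\in L^\al$: the arguments in \cite{chen2017general, singh2022multiplication} typically run through a dominated-convergence-type lemma for continuous norms (uniformly bounded a.e.\ convergence combined with continuity of $\al$ via an Egoroff-style exhaustion), whereas you exploit the $\al$-density of $L^\infty$ in $L^\al$ and split $(\sigma_N(h)-h)f$ into $(\sigma_N(h)-h)(f-\tilde f)+(\sigma_N(h)-h)\tilde f$, estimating the first term by the uniform bound $2\|h\|_\infty$ and the second by the module inequality $\al(g\tilde f)\le\|\tilde f\|_\infty\,\al(g)$. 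This density-splitting is a clean and arguably more elementary alternative, using only Lemma~\ref{lemma2.2} and the module inequality already recorded in Section~\ref{sec2}. Your handling of the word ``simply'' is also fine: once module invariance is established in either direction, both notions reduce to the single condition $z^n\mathcal{M}\subsetneq\mathcal{M}$, which is exactly the paper's definition.
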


Our starting point in the proof of Theorem \ref{thm3.3} will be to guarantee that $\mathcal{M}$ has a non-trivial intersection with $L^\infty$. This fact is established in the next lemma.

\begin{lem}\label{lemma3.6}
Suppose $\mathcal{M}$ is a non-trivial $\al$-closed subspace of $L^\al$ such that $z^n \mathcal{M} \subsetneq \mathcal{M}$. Then, $\mathcal{M} \cap L^\infty \neq \{0\}$.
\end{lem}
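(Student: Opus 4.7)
The plan is to produce, for any nonzero $f\in\mathcal{M}$, an outer function $O\in H^\infty(z^n)$ such that $Of$ is both bounded and nonzero. Since Lemma \ref{lemma3.5} tells us that $\mathcal{M}$ is invariant under multiplication by elements of $H^\infty(z^n)$, the product $Of$ will then lie in $\mathcal{M}\cap L^\infty$ and be nonzero. The technical heart is to choose the outer multiplier as a function of $z^n$ (not just of $z$), because the available invariance is under $T_{z^n}$, not $T_z$.

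To construct $O$, I would fix $0\ne f\in\mathcal{M}$ and, noting that $L^\alpha\subset L^1$, define on $\T$ the nonnegative measurable function
\[
H(w):=\frac{1}{n}\sum_{\zeta^n=w}|f(\zeta)|.
\]
A routine change of variables in the integral gives $\int_\T H\,dm=\int_\T|f|\,dm\le\alpha(f)<\infty$, so $H\in L^1(\T)$. By construction $|f(\zeta)|\le n\,H(\zeta^n)$ a.e.\ on $\T$, since $|f(\zeta)|$ is one of the (nonnegative) summands in $nH(\zeta^n)$.

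Next, I would set $\phi:=1/(1+nH)$, so that $0<\phi\le 1$ a.e., hence $\phi\in L^\infty(\T)$, and $-\log\phi=\log(1+nH)\le nH\in L^1$, so $\log\phi\in L^1(\T)$. The classical outer function construction then yields an outer $O_0\in H^\infty$ with $|O_0|=\phi$ a.e. Put $O(z):=O_0(z^n)$. Then $O\in H^\infty(z^n)$, and
\[
|O(\zeta)\,f(\zeta)|=\phi(\zeta^n)\,|f(\zeta)|\le\frac{nH(\zeta^n)}{1+nH(\zeta^n)}\le 1,
\]
so $Of\in L^\infty$. Since $O\in H^\infty(z^n)$ and $f\in\mathcal{M}$, Lemma \ref{lemma3.5} gives $Of\in\mathcal{M}$. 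Finally, $Of\ne 0$ because $\phi>0$ a.e.\ on $\T$ forces $O\ne 0$ a.e., while $f\ne 0$ by hypothesis. Thus $Of$ is a nonzero element of $\mathcal{M}\cap L^\infty$.

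The step I expect to require the most care is producing the outer multiplier \emph{inside} the subalgebra $H^\infty(z^n)$: the naive choice $1/(1+|f|)$ is not a function of $z^n$, so one cannot simply mimic the $T_z$-invariant argument of Helson–Lowdenslager. Symmetrizing $|f|$ over the fibers of $z\mapsto z^n$, as done in the definition of $H$, is what turns $|f|$ into a dominating function of $z^n$ and lets the standard outer-function factorization take place on the image circle.
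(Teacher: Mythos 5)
Your proof is correct, and it reaches the paper's goal by a genuinely different construction. Both arguments share the same skeleton: manufacture a nonvanishing multiplier $O\in H^\infty(z^n)$ (the delicate point being membership in the subalgebra $H^\infty(z^n)$ rather than just $H^\infty$) so that $Of\in L^\infty$, and then invoke Lemma \ref{lemma3.5} to keep $Of$ inside $\mathcal{M}$. The paper gets its multiplier by working with $\lvert f\rvert^{1/2}\in L^2$, splitting it as $g_1+zg_2+\cdots+z^{n-1}g_n$ with $g_j\in L^2(z^n)$ (the $L^2$ analogue of the decomposition in Lemma \ref{lemma2.3}), forming for each component the bounded analytic function $k_j=\exp(-\lvert g_j\rvert-i(\lvert g_j\rvert)^{\sim})\in H^\infty(z^n)$ via harmonic conjugates, taking $O=k_1\cdots k_n$ so that $O\lvert f\rvert^{1/2}\le n$ (using $xe^{-x}\le 1$), and finally squaring to get $O^2f\in\mathcal{M}\cap L^\infty$. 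You instead symmetrize $\lvert f\rvert$ over the fibers of $z\mapsto z^n$, obtaining $H\in L^1$ on the image circle, build one classical outer function $O_0$ with modulus $1/(1+nH)$, and pull it back as $O(z)=O_0(z^n)$; the bound $\lvert Of\rvert\le 1$ is then immediate, and you explicitly record nonvanishing. Your route avoids the componentwise decomposition and the harmonic-conjugate construction, needs no squaring trick, and still produces an \emph{outer} function of $z^n$ (composition of an outer function with $z^n$ is outer), which matters because the same construction is reused in the proof of Theorem \ref{thm3.3}, where outerness of $O$ is needed to conclude $h_j/O\in H^1$; the paper's route has the advantage of staying entirely within machinery already set up (Lemma \ref{lemma2.3} and the conjugate-function construction) and of paralleling the $H^\alpha$ argument it cites. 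The only points worth a sentence in a write-up are routine: measurability of the fiber average $H$ and its independence (a.e.) of the chosen representative of $f$, and the fact that the boundary values of $O_0(z^n)$ are $O_0^*(\zeta^n)$ a.e., so that $\lvert O\rvert=\phi(\zeta^n)>0$ a.e.
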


\begin{proof}
Suppose $0 \neq f \in \mathcal{M} \subset L^\al \subset L^1$. Then $\lvert f \rvert ^{\frac{1}{2}} \in L^2$. In view of the decomposition (\ref{eq2.1}), we can write $\lvert f \lvert ^{\frac{1}{2}} = g_1 + zg_2+\cdots+z^{n-1}g_n$ for some  $g_1, g_2, \ldots,g_n \in L^2(z^n)$. Define
\[k_{j} =  exp (- \lvert g_j \rvert \; - i \; (\lvert g_j \rvert)^{\sim}) \]
where $(\lvert g_j \rvert)^{\sim}$ stands for the harmonic conjugate of the $L^2$ function $\lvert g_j\rvert$ and $(\lvert g_j \rvert)^{\sim} \in L^2(z^n)$ (this is possible for all $L^p$ functions \cite{Koosis1998HpSpace}). 

Note that $k_j$ is an analytic function such that $\lvert k_j \rvert \leq 1$, i.e, $\lvert k_j \rvert \leq 1 \in H^\infty(z^n)$  for each $1 \leq j \leq n$. Put  $O$ = $k_1 k_2 \ldots k_n$.\\
Note that
\begin{equation*}
\hspace{-1cm} O \lvert f \rvert^\frac{1}{2} = O \left (g_1 + z g_2+\cdots+ z^{n-1} g_n \right ) 
\end{equation*}
\[\hspace{0.8 cm} = O g_1 + z O g_2 +\cdots+ z^{n-1} O g_n. \]
\smallskip
Therefore,\\
\smallskip
$\lvert O \rvert \lvert f \rvert^\frac{1}{2}  \leq \lvert k_1 \rvert \lvert g_1 \rvert +\cdots+ \lvert k_n\rvert \lvert g_n \rvert$\\
\smallskip
\hspace{1cm}$=exp (- \lvert g_1 \rvert)\lvert g_1 \rvert + \cdots + exp (- \lvert g_n \rvert) \lvert g_n \rvert \leq n$.\\
Thus $O \lvert f \rvert^\frac{1}{2} \in L^\infty$ which implies that $O^2 f \in L^\infty$.
By Lemma \ref{lemma3.5}, we have $O^2 f \in \mathcal{M}.$ Hence $O^2 f \in \mathcal{M} \cap L^\infty$. 
\end{proof}

We now return to the proof of Theorem \ref{thm3.3}.
\begin{proof}
In view of Lemma~\ref{lemma3.4} and Lemma~\ref{lemma3.6}, we conclude that $\mathcal{M} \cap L^\infty$ is a non-trivial weak*-closed subspace of $L^\infty$. It can be easily seen that $\mathcal{M} \cap L^\infty$ is simply invariant under $T_{z^n}$. So, by Theorem~\ref{thm3.1}, there exist $n$-unimodular functions $\varphi_1,\ldots,\varphi_r$ ($r \leq n$) such that 
\begin{equation}\label{eq3.1}
\mathcal{M} \cap L^\infty = \sum\limits_{j=1}^{r} \oplus \varphi_j H^\infty(z^n) \oplus \mathcal{K}^{\overline{A}}_{\mathcal{M} \cap L^\infty}, 
 \end{equation}
and $A$ is the corresponding $n$-unimodular matrix in $M_{rn}(L^\infty(z^n))$ associated with the $r$ tuple $(\varphi_1,\ldots,\varphi_r)$. Moreover, the $T_{z^n}$-doubly invariant subspace $\mathcal{K}^{\overline{A}}_{\mathcal{M} \cap L^\infty}$ has the form 
\begin{equation}\label{eq3.2}
   \mathcal{K}^{\overline{A}}_{\mathcal{M} \cap L^\infty} = \left \{ f \in \mathcal{M}\cap L^\infty: \sum\limits_{i=1}^{n} \overline{\varphi_{ji}}f_i = 0 \; a.e. \; \forall \;  1 \leq j \leq r \right \}
\end{equation}
in which $f = \sum\limits_{i=1}^{n} z^{i-1} f_i$ and $f_i \in L^2(z^n)$.

We claim that $\mathcal{M} = \sum\limits_{j=1}^{r} \oplus \varphi_j H^\al(z^n) \oplus \mathcal{K}^{\overline{A}}_{\mathcal{M}}$.\\
It is trivial to note that  $\mathcal{K}^{\overline{A}}_{\mathcal{M}} \subset \mathcal{M}$.\\ Note that $\varphi_j H^\infty(z^n) \subset \mathcal{M}$ for each $1 \leq j \leq r$. We now show that $\varphi_j H^\al(z^n) \subset \mathcal{M}$. For any $f \in H^\al(z^n)$, there exists a sequence $\{f_n\}_{n=1}^{\infty} \in H^\infty(z^n)$ such that $\al(f_n - f) \to 0$ and since $\varphi_j \in L^\infty$, we see that $\varphi_j f_n$ converges to $\varphi_j f$ in $L^\al$. Also the sequence $\{\varphi_j f_n\}_{n=1}^\infty \subset \mathcal{M}$, and hence $\varphi_j f \in \mathcal{M}$. This implies that $\sum\limits_{j=1}^{r} \oplus \varphi_j H^\al(z^n) \oplus \mathcal{K}^{\overline{A}}_{\mathcal{M}} \subset \mathcal{M}$.

In order to prove the reverse containment, note that (as in the proof of Lemma \ref{lemma3.6}) for any $f \in \mathcal{M}$, we can construct an outer function $O \in H^\infty(z^n)$ such that $O f \in \mathcal{M} \cap L^\infty$. By equation (\ref{eq3.1}), we can write 
\begin{equation}\label{eq3.3}
 Of = \varphi_1 h_1 + \cdots + \varphi_r h_r + K
\end{equation}
where  $h_1, \ldots, h_r \in H^\infty(z^n)$ and $K \in \mathcal{K}^{\overline{A}}_{\mathcal{M} \cap L^\infty} $.

In view of Lemma~\ref{lemma2.3}, we can write
\begin{equation*}
    f = f_1 +\cdots+z^{n-1} f_n
\end{equation*}
for some $f_1,\ldots,f_n \in L^\al(z^n)$. 
Therefore, 
\begin{equation}\label{eq3.4}
    Of = Of_1 +\cdots+z^{n-1} O f_n
\end{equation}
Also we can decompose $\varphi_j$ and $K$ as follows 
\begin{equation}\label{eq3.5}
\begin{aligned}
    \varphi_j = \varphi_{j1} +\cdots+z^{n-1}\varphi_{jn}, \; \; \varphi_{ji} \in L^\infty(z^n) \\
    K = K_1 +\cdots+z^{n-1} K_n, \; \; K_i \in L^2(z^n)
\end{aligned}
\end{equation}

From equations (\ref{eq3.3}), (\ref{eq3.4}) and (\ref{eq3.5}), it is easy to see that
\begin{equation}\label{eq3.5}
\begin{pmatrix}
    O f_1 \\
    \vdots\\
    Of_n
\end{pmatrix}
\; = \;   \; \begin{pmatrix}
    \varphi_{11} h_1 + \varphi_{21} h_2 + \cdots + \varphi_{r1} h_r + K_1\\
    \vdots\\
    \varphi_{1n} h_1 + \varphi_{2n} h_2 + \cdots + \varphi_{rn} h_r + K_n
\end{pmatrix}
.
\end{equation}
It follows that
\begin{equation}\label{eq3.7}
\begin{pmatrix}
    \overline{O f_1} & \cdots & \overline{O f_n}
\end{pmatrix}
\; = \; \begin{pmatrix}
    \overline{\sum\limits_{j=1}^{r} \varphi_{j1} h_j + K_1} & \cdots & \overline{\sum\limits_{j=1}^{r} \varphi_{jn} h_j + K_n}
\end{pmatrix}
.\; 
\end{equation}

Therefore,
\[ \lvert O f_1\rvert^2 +\cdots+ \lvert O f_n\rvert^2 = \left \lvert \sum\limits_{j=1}^{r} \varphi_{j1} h_j + K_1 \right \rvert^2 + \cdots + \left \lvert \sum\limits_{j=1}^{r} \varphi_{jn} h_j + K_n \right \rvert^2 . \]
Hence, for each $1 \leq i \leq n$,
\[ \left \lvert \sum\limits_{j=1}^{r} \varphi_{ji} h_j + K_i \right \rvert^2 \leq \lvert O f_1\rvert^2 +\cdots+ \lvert O f_n\rvert^2 \leq \lvert O \rvert^2 (\lvert f_1 \rvert + \cdots +\lvert f_n \rvert)^2. \]
This yields
\[ \left \lvert \sum\limits_{j=1}^{r} \varphi_{ji} \dfrac{h_j}{O} + \dfrac{K_i}{O} \right \rvert \leq \lvert f_1 \rvert + \cdots +\lvert f_n \rvert. \]
Since $f_1,\ldots,f_n \in L^\al$, we have
\[\al\left( \left \lvert \sum\limits_{j=1}^{r} \varphi_{ji} \dfrac{h_j}{O} + \dfrac{K_i}{O} \right \rvert\right ) \leq \al (\lvert f_1 \rvert + \cdots +\lvert f_n \rvert) < \infty.\]
Therefore, for each $1 \leq i \leq n$, $ \mathlarger{\sum\limits_{j=1}^{r}} \varphi_{ji} \dfrac{h_j}{O} + \dfrac{K_i}{O}  \in L^\al$.

We now claim that each $\dfrac{h_j}{O}$ and $\dfrac{K_i}{O}$ belong to $L^\al$.\\
For a fixed $i$, we have
\[  \varphi_{1i} \frac{h_1}{O} + \cdots+  \varphi_{ji}  \frac{h_j}{O} + \cdots +  \varphi_{ri}  \frac{h_r}{O} +  \frac{K_i}{O} \in L^\al. \]
Since $\overline{\varphi_{ji}} \in L^\infty$, so we get
\[  \varphi_{1i} \overline{\varphi_{ji}} \frac{h_1}{O} + \cdots+  \lvert \varphi_{ji} \rvert^2 \frac{h_j}{O} + \cdots + \varphi_{ri} \overline{\varphi_{ji}} \frac{h_r}{O} +  \overline{\varphi_{ji}} \frac{K_i}{O} \in L^\al. \]
Take the summation over $1 \leq i \leq n$,
\[\sum\limits_{i=1}^{n} \varphi_{1i} \overline{\varphi_{ji}} \frac{h_1}{O} + \cdots+ \sum\limits_{i=1}^{n} \lvert \varphi_{ji} \rvert^2 \frac{h_j}{O} + \cdots + \sum\limits_{i=1}^{n} \varphi_{ri} \overline{\varphi_{ji}} \frac{h_r}{O} + \sum\limits_{i=1}^{n} \overline{\varphi_{ji}} \frac{K_i}{O} \in L^\al. \]
Since $A = (\varphi_{ji})$ is $n$-unimodular matrix, so by conditions  (\ref{eq2.3}), (\ref{eq2.4}) and (\ref{eq3.2}), we get $\dfrac{h_j}{O} \in L^\al$ for $1 \leq j \leq r$. So, $\mathlarger{\sum\limits_{j=1}^{r}} \varphi_{ji} \dfrac{h_j}{O} \in L^\al$. \\
Since $\mathlarger{\sum\limits_{j=1}^{r}} \varphi_{ji} \dfrac{h_j}{O} + \dfrac{K_i}{O} \in L^\al$, implies $\dfrac{K_i}{O} = \mathlarger{\sum\limits_{j=1}^{r}} \varphi_{ji} \dfrac{h_j}{O} + \dfrac{K_i}{O} - \mathlarger{\sum\limits_{j=1}^{r}} \varphi_{ji} \dfrac{h_j}{O} \in L^\al$. Hence the claim follows.

Furthermore, because $O$ is an outer function implies $\dfrac{h_j}{O} \in H^1$. Hence, $\dfrac{h_j}{O} \in H^\al$. By Lemma~\ref{lemma2.2}, the Cesaro means  $\sigma_l\left(\dfrac{h_j}{O}\right)$ converge to $\dfrac{h_j}{O}$ in $H^\al$. Since $\sigma_l\left(\dfrac{h_j}{O}\right)$ is a polynomial in $z^n$, so $\dfrac{h_j}{O}\in H^\al(z^n)$ for all $1 \leq j \leq r$.\\

Lastly to show that $f \in \sum\limits_{j=1}^{r} \oplus \varphi_j H^\al(z^n) \oplus \mathcal{K}^{\overline{A}}_{\mathcal{M}}$, it is suffices to prove that $\dfrac{K}{O} \in \mathcal{K}^{\overline{A}}_{\mathcal{M}}$. Observe that $\dfrac{K}{O} = f - \left (\varphi_1 \dfrac{h_1}{O} + \varphi_2 \dfrac{h_2}{O}+\cdots+ \varphi_r \dfrac{h_r}{O} \right) \in \mathcal{M}$ (since $\sum\limits_{j=1}^{r} \oplus \varphi_j H^\al(z^n) \subset \mathcal{M}$).\\
Further, as $K \in \mathcal{K}^{\overline{A}}_{\mathcal{M} \cap L^\infty}$ and in view of (\ref{eq3.2}), we have for all $1 \leq j \leq r$
$$ \overline{\varphi_{j1}}K_1 + \overline{\varphi_{j2}}K_2+\cdots+\overline{\varphi_{jn}}K_n = 0 ~  a.e. $$
Therefore, $ \overline{\varphi_{j1}} \dfrac{K_1}{O} + \overline{\varphi_{j2}}\dfrac{K_2}{O}+\cdots+\overline{\varphi_{jn}}\dfrac{K_n}{O} = 0 ~  a.e.$ This implies $\dfrac{K}{O} \in \mathcal{K}^{\overline{A}}_{\mathcal{M}}$ and hence
$$\mathcal{M} = \sum\limits_{j=1}^{r} \oplus \varphi_j H^\al(z^n) \oplus \mathcal{K}^{\overline{A}}_{\mathcal{M}}.$$
\end{proof}
\begin{rem}
Since $\|.\|_p$  for $1 \leq p < \infty$ and $p\neq 2 $ are continuous rotationally symmetric norms, so Theorem A in \cite{SnehLata2010finite} comes as an special case of Theorem \ref{thm3.3}.
\end{rem}

\begin{cor}(\cite{chen2017general})
Let $\al$ be a continuous rotationally symmetric norm and $\mathcal{M}$ be a non-trivial closed subspace of $L^\al$ which is simply invariant under $T_{z}$. Then there exists an unimodular function $\varphi$ such that
\[ \mathcal{M} = \varphi H^\al. \]
\end{cor}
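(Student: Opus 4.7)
The plan is to deduce this corollary as the special case $n=1$ of Theorem~\ref{thm3.3}. Specializing the main theorem to $n=1$, we have $L^\al(z^n) = L^\al$, $H^\al(z^n) = H^\al$, and $L^2(z^n) = L^2$, so the block decomposition from Lemma~\ref{lemma2.3} becomes trivial and each function $\varphi_j$ in the statement of Theorem~\ref{thm3.3} decomposes as $\varphi_j = \varphi_{j1}$, meaning the associated matrix $A$ collapses into a single column.

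First I would observe that the constraint $r \leq n = 1$ forces $r = 1$; combined with $\mathcal M$ being non-trivial, this gives exactly one generator $\varphi = \varphi_1$. I would then unpack what the $n$-unimodular condition becomes in this setting: the matrix is the $1\times 1$ matrix $A=(\varphi)$, and $AA^* = I$ reduces to $|\varphi|^2 = 1$ a.e., so $\varphi$ is unimodular in the classical sense.

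Next I would show that the residual doubly invariant piece disappears. By Theorem~\ref{thm3.3}(c), we have
\[
\mathcal{K}^{\overline{A}}_{\mathcal M} = \{\, f \in \mathcal M : \overline{\varphi}\, f = 0 \text{ a.e.}\,\}.
\]
Since $|\varphi| = 1$ a.e., the function $\overline{\varphi}$ is non-vanishing, so $\overline{\varphi} f = 0$ a.e.\ forces $f = 0$ a.e. Hence $\mathcal{K}^{\overline{A}}_{\mathcal M} = \{0\}$, and the decomposition in Theorem~\ref{thm3.3} collapses to $\mathcal M = \varphi H^\al$, which is exactly the conclusion of the corollary.

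There is no real obstacle here, as the work has already been done in Theorem~\ref{thm3.3}; the only thing worth double-checking is the translation of notation (particularly that the definition of $n$-unimodular in Section~\ref{sec2} coincides with the usual notion of unimodularity when $n=1$, and that $H^\al(z) = H^\al$ by Lemma~\ref{lemma2.2}, since span$\{z^k : k \geq 0\}$ is $\al$-dense in $H^\al$). Thus the corollary follows immediately, recovering Chen's Helson--Lowdenslager type theorem for $L^\al$ from the more general $T_{z^n}$ result.
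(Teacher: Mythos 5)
Your proposal is correct and follows essentially the same route as the paper: specialize Theorem~\ref{thm3.3} to $n=1$, note that the associated matrix is the $1\times 1$ matrix $(\varphi)$ with $|\varphi|=1$ a.e., and observe that unimodularity forces $\mathcal{K}^{\overline{A}}_{\mathcal{M}}=\{0\}$, so $\mathcal{M}=\varphi H^\al$. The extra remarks on $r=1$ and on $H^\al(z)=H^\al$ are harmless elaborations of what the paper leaves implicit.
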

\begin{proof}
For the case of multiplication by $T_z$, we have $n=1$. By Theorem \ref{thm3.3}, we see that there exists an unimodular function $\phi$ such that 
\[ \mathcal{M} = \varphi H^\al \oplus \mathcal{K}^{\overline{A}}_{\mathcal{M}}. \]
Here $A$ is the $1 \times 1$ matrix $(\varphi)$, and $\mathcal{K}^{\overline{A}}_{\mathcal{M}} = \{f \in \mathcal{M} : \overline{\varphi} f = 0~a.e.\}.$ Further, the unimodularity of $\varphi$ forces $\mathcal{K}^{\overline{A}}_{\mathcal{M}} = \{0\}$.
\end{proof}

\section{de Branges result in $H^\al$}\label{sec4}

de Branges in \cite{de2015square} first characterized Hilbert spaces which are contractively contained in $H^2$ and on which $T_z$ acts as an isometry. In \cite{UNSingh1991debranges}, it was established using the Wold decomposition that the condition of contractive containment can be dropped. This was a significant generalization of de Branges' result. This result was further extended to all $H^p$ spaces in  \cite{Agrawal1995deBranges}. In this section, we extend the result of Singh and Agrawal \cite{Agrawal1995deBranges} to Hardy spaces equipped with rotationally symmetric norms.

\begin{thm}\label{thm4.1}
Let $\mathcal{M}$ be a Hilbert space which is a vector subspace of $H^\al$ such that $T_z(\mathcal{M}) \subset \mathcal{M}$ and $T_z$ acts as an isometry on $\mathcal{M}$. Then, there exists $\phi \in H^2 \cap H^\al$, such that
\[ \mathcal{M} = \phi H^2.\]
Further, $\|\phi g\|_\mathcal{M} = \|g\|_{H^2}$ for all $g \in H^2$.
\end{thm}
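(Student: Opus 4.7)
My plan is to apply the Wold decomposition to $T_z$, viewed as an isometry on the Hilbert space $\mathcal{M}$, and write
\[\mathcal{M}=\mathcal{M}_u\oplus\bigoplus_{k\ge 0} z^k\mathcal{W},\]
where $\mathcal{W}=\mathcal{M}\ominus z\mathcal{M}$ and $\mathcal{M}_u=\bigcap_{k\ge 0} z^k\mathcal{M}$. The unitary summand $\mathcal{M}_u$ must be trivial: any $f\in\mathcal{M}_u$ lies in $z^k\mathcal{M}\subset z^kH^\al\subset z^kH^1$ for every $k\ge 0$, and $\bigcap_{k\ge 0}z^kH^1=\{0\}$ by inspection of Fourier coefficients. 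Consequently $\mathcal{M}=\bigoplus_{k\ge 0}z^k\mathcal{W}$, and for any unit vector $\phi\in\mathcal{W}$ the isometry of $T_z$ makes $\{z^k\phi\}_{k\ge 0}$ orthonormal in $\mathcal{M}$, so the map $p\mapsto p\phi$ on polynomials extends by continuity to an isometric embedding $U_\phi:H^2\to\mathcal{M}$.

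The crux, and in my view the main obstacle, is to show $\dim\mathcal{W}=1$. Suppose $\phi_1,\phi_2\in\mathcal{W}$ were $\mathcal{M}$-orthonormal, so that $\phi_1H^2$ and $\phi_2H^2$ would sit in $\mathcal{M}$ as mutually orthogonal isometric copies of $H^2$. Both $\phi_1,\phi_2$ are nonzero in $H^\al\subset H^1$, so $\log(|\phi_1|+|\phi_2|)\in L^1(\T)$ and one may choose an outer $F\in H^1$ with $|F|=|\phi_1|+|\phi_2|$ a.e.\ on $\T$, writing $\phi_j=u_jF$ with $u_j\in H^\infty$ and $|u_1|+|u_2|=1$. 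Then
\[\phi_1u_2-\phi_2u_1=F(u_1u_2-u_2u_1)=0\]
pointwise on $\T$. Since $u_1,u_2\in H^\infty\subset H^2$, $\phi_1u_2\in\phi_1H^2\subset\mathcal{M}$ and $\phi_2u_1\in\phi_2H^2\subset\mathcal{M}$, and the injectivity of the set-theoretic inclusion $\mathcal{M}\hookrightarrow H^\al$ transports the pointwise identity to the equality $\phi_1u_2=\phi_2u_1$ in $\mathcal{M}$. Orthogonality of $\phi_1H^2$ and $\phi_2H^2$ in $\mathcal{M}$ then forces both sides to vanish, giving $u_1=u_2=0$ and so $\phi_1=\phi_2=0$, a contradiction. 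The delicacy here lies in weaving together the abstract Hilbert-space orthogonality of $\mathcal{M}$ with the pointwise algebra inherited from $H^1$.

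With $\dim\mathcal{W}=1$, $U_\phi:H^2\to\mathcal{M}$ is an isometric isomorphism, yielding $\|\phi g\|_\mathcal{M}=\|g\|_{H^2}$. It remains to identify $U_\phi(g)$ with the literal pointwise product $\phi g$ and to verify $\phi\in H^2$ (that $\phi\in H^\al$ is automatic from $\phi\in\mathcal{M}\subset H^\al$). The identification I would establish via the closed graph theorem: the inclusion $\mathcal{M}\hookrightarrow H^\al$ is continuous because point evaluations on $\D$ are continuous on $H^1\supset H^\al$, so polynomial approximants $p_n\phi$ converging to $U_\phi(g)$ in $\mathcal{M}$ also converge pointwise on $\D$ to $\phi g$. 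For $\phi\in H^2$, one uses that $\mathcal{M}$ is then a reproducing kernel Hilbert space; computing $K_{z_0}$ from the orthonormal basis $\{z^n\phi\}$ gives $K_{z_0}(z)=\overline{\phi(z_0)}\phi(z)/(1-\bar z_0 z)$, and combined with the boundedness of $M_\phi:H^2\to H^1$ (again from closed graph) this forces $\phi\in L^2(\T)$, i.e., $\phi\in H^2$.
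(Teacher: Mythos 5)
Your overall architecture (Wold decomposition, killing the unitary part by Fourier coefficients, building an isometry $U_\phi$ of $H^2$ into $\mathcal{M}$, then showing $\dim\mathcal{W}=1$) is the same as the paper's, and your one-dimensionality argument via a common outer majorant $F$ with $\phi_j=u_jF$, $u_j\in H^\infty$, is a nice variant that avoids first proving $\phi_j\in H^2$ (the paper instead shows $\mathcal{N}\subset H^2$ and uses $\phi_1\phi_2\in\phi_1H^2\cap\phi_2H^2$). But there is a genuine gap at the step you yourself flag as remaining: the identification of the abstract limit $U_\phi(g)$ with the pointwise product $\phi g$. You propose to get it from continuity of the inclusion $\mathcal{M}\hookrightarrow H^\al$ via the closed graph theorem, ``because point evaluations on $\D$ are continuous on $H^1$.'' That justification does not work: to verify the graph is closed you must show that $f_n\to f$ in $\mathcal{M}$ and $f_n\to g$ in $H^\al$ force $f=g$; continuity of point evaluations on $H^1$ controls only the second limit, while nothing in the hypotheses relates $\mathcal{M}$-convergence to pointwise (or any function-theoretic) convergence --- $\mathcal{M}$ is only \emph{algebraically} contained in $H^\al$, and dropping precisely this kind of a priori continuity is the point of the de Branges-type statement. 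In effect, continuity of the inclusion is tantamount to what you are trying to prove, and every later use of the identification (the equality $\phi_1u_2=\phi_2u_1$ \emph{as elements of} $\mathcal{M}$ in the dimension argument, the reproducing-kernel formula, the claim that $M_\phi$ maps $H^2$ into $H^1$) inherits the gap.

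The paper closes this gap with an elementary truncation trick you could adopt verbatim: for each fixed $k$ write $\phi g_n=b_0\phi+\cdots+b_kz^k\phi+z^{k+1}\phi h_n$ with $h_n$ the tail polynomial, observe that $\{\phi h_n\}$ is also $\mathcal{M}$-Cauchy with limit $f\in\mathcal{M}$, and pass to the limit inside $\mathcal{M}$ to obtain $h=b_0\phi+\cdots+b_kz^k\phi+z^{k+1}f$ as an identity between genuine $H^1$ functions; comparing the $k$-th Fourier coefficient for every $k$ identifies $h$ with $\phi g$, using only that members of $\mathcal{M}$ are $H^1$ functions and no continuity of the embedding. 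Once that identification is in place, your route to $\phi\in H^2$ can be repaired: the closed graph theorem \emph{is} legitimately applicable to $M_\phi:H^2\to H^1$ (there both convergences imply pointwise convergence on $\D$), and a converse-H\"older/duality argument yields $\phi\in L^2$; the paper instead shows $\phi$ multiplies all of $L^2$ into $\mathcal{L}^\al$ using the rotational symmetry of $\al$ and invokes the converse of H\"older's inequality directly, which requires no boundedness at all.
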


\begin{proof}
Since $T_z$ acts as an isometry on $\mathcal{M}$, so by Wold decomposition theorem (\cite{Hoffman1962banach}), we can write
\begin{equation}\label{eq4.1}
 \mathcal{M} = \bigcap\limits_{n=0}^{\infty} T_{z^n} \mathcal{M} ~ \oplus ~ \bigoplus\limits_{n=0}^{\infty} T_{z^n} \mathcal{N}
\end{equation}
where $\mathcal{N}$ stands for the orthogonal complement of $z \mathcal{M}$ in $\mathcal{M}$.

We claim that $\bigcap\limits_{n=0}^{\infty} T_{z^n} \mathcal{M} = \{0\}$.\\ Note that for any $f \in \bigcap\limits_{n=0}^{\infty} T_{z^n} \mathcal{M}$ can be expanded as a Fourier series 
\[ f(z) = a_0 + a_1 z + a_2 z^2 + \cdots + a_n z^n + \cdots \]
where $a_n = \int\limits_\T f z^{-n} dm$. Since $f \in \bigcap\limits_{n=0}^{\infty} T_{z^n} \mathcal{M}$, so for $n \geq 0$ we can find $g_{n+1} \in \mathcal{M}$ such that $f(z)=z^{n+1} g_{n+1}(z)$. This forces $a_n = 0$ and hence $f=0$.\\
Therefore, 
\begin{equation}\label{eq4.2}
    \mathcal{M} = \mathcal{N} \oplus z \mathcal{N} \oplus z^2 \mathcal{N} \oplus \cdots.
\end{equation}

Let $\phi$ be any non-zero element of $\mathcal{N}$. Without loss of generality assume that  $\|\phi\|_\mathcal{M} = 1$.
We first show that $\phi$ multiplies $H^2$ into $\mathcal{M}$.
Suppose $g(z) = \sum\limits_{n=0}^{\infty} b_n z^n$ be an arbitrary element of $H^2$. Put $g_n(z) = \sum\limits_{k=0}^{n} b_k z^k$. \\
In view of (\ref{eq4.2}) we make the following computation, for any $n \geq 0$:
\begin{equation*}
\begin{split}
\|\phi g_n\|_\mathcal{M}^2 &= \|b_0 \phi +b_1 z \phi +\cdots+b_n z^n \phi \|^2_\mathcal{M}\\
&= \|b_0 \phi\|^2_\mathcal{M}+ \|b_1 z \phi\|^2_\mathcal{M}+\cdots+ \|b_n z^n \phi\|^2_\mathcal{M} \\
&= \lvert b_0\rvert^2 + \lvert b_1\rvert^2 + \cdots + \lvert b_n\rvert^2\\
&= \|g_n\|^2_{H^2}.
\end{split}
\end{equation*}
Since $\{g_n\}_{n=1}^\infty$ is a Cauchy sequence in $H^2$, so $\{\phi g_n\}_{n=1}^\infty$ is a Cauchy sequence in $\mathcal{M}$ and hence, there exists $h \in \mathcal{M}$ such that $\phi g_n \to h$ in $\mathcal{M}$.

Note that for any $k \leq n$, we have 
\begin{equation}\label{eq4.3}
    \begin{split}
        \phi g_n &= b_0 \phi +b_1 z \phi +\cdots + b_{k} z^{k} \phi + b_{k+1} z^{k+1} \phi + \cdots + b_n z^n \phi\\
        &=  b_0 \phi +b_1 z \phi +\cdots + b_{k} z^{k} \phi + z^{k+1} \phi h_n
    \end{split}
\end{equation}
where $h_n = b_{k+1} + b_{k+2} z +\cdots+ b_{n} z^{n-k-1} \in H^2$.\\
In a similar fashion as above, we can show that $\{\phi h_n\}$ is a Cauchy sequence in $\mathcal{M}$ and hence, there exists $f \in \mathcal{M}$ such that 
\begin{equation}\label{eq4.4}
    \phi h_n \to f  ~\text{in}~ {\mathcal{M}}. 
\end{equation} 
From (\ref{eq4.3}) and (\ref{eq4.4}), we get
$$h= b_0 \phi +b_1 z \phi +\cdots + b_{k} z^{k} \phi + z^{k+1} f.$$

Observe that the coefficient of $z^k$ in $h$ is equal to the coefficient of $z^k$ in $b_0 \phi +b_1 z \phi +\cdots + b_{k} z^{k} \phi$. Put $\phi = \beta_0 + \beta_1 z + \beta_2 z^2 + \cdots+ \beta_k z^k + \cdots$. Therefore, the coefficient of $z^k$ in $b_0 \phi +b_1 z \phi +\cdots + b_{k} z^{k} \phi+ \cdots$ is $b_0 \beta_k + b_1 \beta_{k-1} + \cdots + b_k \beta_0$. This is the same as the coefficient of $z^k$ in the formal product of $\phi g$. Therefore $\phi g= h \in \mathcal{M}$, and hence $\phi H^2 \subset \mathcal{M}$.

Recall that on the unit circle, $L^2 = H^2 \oplus \overline{z H^2}$. Let $f \in L^2$ then $f = f_1 + \overline{zf_2}$ for some $f_1, f_2 \in H^2$. So, $$\phi f = \phi f_1 + \phi\cdot\overline{z f_2}.$$ 
Note that $\phi f_1$ , $\phi f_2$ belong to $H^\al$, and this implies
$$\al(\phi\cdot\overline{z f_2}) = \al(\lvert \phi \cdot \overline{z f_2} \rvert) = \al(\lvert \phi f_2 \rvert) < \infty.$$
Thus $\al(\phi f) \leq \al(\phi f_1) + \al(\phi\cdot\overline{z f_2}) < \infty$, and hence $\phi$ multiplies $L^2$ into $\mathcal{L}^\al.$ \\
Note that $\phi f \in \mathcal{L^\al} \subset L^1$ for all $f \in L^2$. Therefore, by converse of Holder's inequality \cite{Leach1956holder's}, we have $\phi \in L^2$. Also $\phi \in \mathcal{M} \subset H^\al \subset H^1$, implies that $\phi \in L^2 \cap H^1 = H^2$. Finally, with this we conclude that $\phi \in H^2 \cap H^\al$ and $\mathcal{N} \subset H^2 \cap H^\al$. 

We claim that the \textit{dim}($\mathcal{N}$)= 1. \\
Let $\phi_1$ and $\phi_2$ be two unit vectors in $\mathcal{N}$ such that $\phi_1 \perp \phi_2$ in $\mathcal{M}$. We show that $ \phi_1 H^2 \perp \phi_2 H^2$ in $\mathcal{M}$. Consider $f , g \in H^2$ and $f_n = \sum\limits_{k=1}^{n} a_n z^n$, $g_n = \sum\limits_{k=0}^{n} b_n z^n$ such that $f_n \to f$ and $g_n \to g$ in $H^2$. We have already proved above that any element of $\mathcal{N}$ multiplies $H^2$ into $\mathcal{M}$ i.e., $\phi_1 f , \phi_2 g \in \mathcal{M}$ such that 
\begin{equation}\label{eq4.5}
    \phi_1 f_n \to \phi_1 f~\text{and}~\phi_2 g_n \to \phi_2 g~\text{in}~\mathcal{M}.
\end{equation}

Moreover, from the orthogonality of $\phi_1$ and $\phi_2$ in $\mathcal{M}$ and (\ref{eq4.2})
\begin{equation*}
    \begin{split}
        \langle \phi_1 f_n, \phi_2 g_n \rangle_\mathcal{M} &= \langle a_0 \phi_1 + a_1 z \phi_1 +\cdots+ a_n z^n \phi_1 , b_0 \phi_2 + b_1 z \phi_2 + \cdots+ b_n z^n \phi_2  \rangle_\mathcal{M}\\
         &= 0. 
    \end{split}
\end{equation*}
Taking limit $n \to \infty$, in view of (\ref{eq4.5}), we have
$$ \lim\limits_{n\to \infty} \langle \phi_1 f_n , \phi_2 g_n \rangle_{\mathcal{M}} = \langle \phi_1 f , \phi_2 g \rangle_{\mathcal{M}} = 0.$$
Therefore, $\phi_1 H^2 \perp \phi_2 H^2$ in $\mathcal{M}$.

Since $\mathcal{N} \subset H^2 \cap H^\al$, we see that $\phi_1 \phi_2 \in\phi_1 H^2 \cap \phi_2 H^2$. This forces $\phi_1 \phi_2 = 0$. Therefore, either $\phi_1 \equiv 0$ or $\phi_2 \equiv 0$. This confirms that \textit{dim}($\mathcal{N})=1$. 

Let $\mathcal{N} = \langle \phi \rangle$ for some unit vector $\phi \in \mathcal{N}$.
Therefore in view of (\ref{eq4.2}), we can write
\[\mathcal{M} = \phi H^2.\]
Further, since $\|\phi g_n\|_\mathcal{M}^2 = \|g_n\|_{H^2}^2$ and $\phi g_n \to \phi g$ in $\mathcal{M}$, it follows that $\|\phi g\|_\mathcal{M}^2 = \|g\|_{H^2}^2$ for all $g \in H^2$. This completes the proof.
\end{proof}

In the next theorem, we characterize the Hilbert space $\mathcal{M}$ under the condition that $H^\al$ is properly contained in $H^2$.  

\begin{thm}\label{thm4.2}
Let $\al$ be a continuous rotationally symmetric norm and $H^\al$ is properly contained in $H^2$. Suppose $\mathcal{M}$ is a Hilbert space algebraically contained in $H^\al$ such that $T_z$ acts as an isometry on $\mathcal{M}$ and $T_z \mathcal{M} \subset \mathcal{M}$. Then, $\mathcal{M} = \{0\}.$
\end{thm}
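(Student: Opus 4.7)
The plan is to suppose $\mathcal{M}\neq\{0\}$ and derive a contradiction. By Theorem~\ref{thm4.1}, one writes $\mathcal{M}=\phi H^{2}$ for some nonzero $\phi\in H^{2}\cap H^{\al}$ with $\|\phi g\|_{\mathcal{M}}=\|g\|_{H^{2}}$. First, promote the algebraic inclusion $\mathcal{M}\hookrightarrow H^{\al}$ to a continuous one via the closed graph theorem: both spaces embed continuously into $L^{1}$, the former by Cauchy--Schwarz ($\|\phi g\|_{1}\leq\|\phi\|_{2}\|g\|_{2}=\|\phi\|_{2}\|\phi g\|_{\mathcal{M}}$) and the latter since $\|\cdot\|_{1}\leq\al$. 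This yields a constant $C$ with $\al(\phi g)\leq C\|g\|_{2}$ for every $g\in H^{2}$.

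Next, localize by choosing $\delta>0$ such that $A=\{z\in\T:|\phi(z)|\geq\delta\}$ has positive measure, possible because $\phi\not\equiv 0$. For any $g\in H^{2}$ and $w\in\T$, the rotation $g_{w}(z):=g(\bar wz)$ lies in $H^{2}$ with $\|g_{w}\|_{2}=\|g\|_{2}$, so $\al(\phi g_{w})\leq C\|g\|_{2}$. The pointwise bound $|g_{w}\chi_{A}|\leq\delta^{-1}|\phi g_{w}|$ combined with the monotonicity of $\al$ (from its extension via simple functions) gives $\al(g_{w}\chi_{A})\leq C\|g\|_{2}/\delta$. Since $g_{w}\chi_{A}$ is the $w$-rotation of $g\chi_{\bar wA}$, the rotational symmetry of $\al$ yields
\[ \al(g\chi_{\bar wA})\leq\frac{C\|g\|_{2}}{\delta}\qquad\text{for every }w\in\T.\]

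Then average over $w$. By Fubini and the rotation invariance of $m$,
\[ \int_{\T}g(z)\chi_{\bar wA}(z)\,dm(w)\;=\;g(z)\int_{\T}\chi_{A}(wz)\,dm(w)\;=\;m(A)\,g(z), \]
so Minkowski's integral inequality for the norm $\al$ gives
\[ m(A)\,\al(g)\;=\;\al\!\left(\int_{\T}g\chi_{\bar wA}\,dm(w)\right)\;\leq\;\int_{\T}\al(g\chi_{\bar wA})\,dm(w)\;\leq\;\frac{C\|g\|_{2}}{\delta}. \]
Thus $\al(g)\leq C'\|g\|_{2}$ for every $g\in H^{2}$, where $C'=C/(\delta m(A))$. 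Applying this to the differences of Ces\`aro means $\sigma_{n}(g)\in H^{\infty}$ (which converge to $g$ in $H^{2}$ by Lemma~\ref{lemma2.2}) shows $\{\sigma_{n}(g)\}$ is $\al$-Cauchy in $L^{\infty}\subset L^{\al}$, so $g\in L^{\al}$; together with $g\in H^{1}$, this puts $g\in H^{\al}$. Hence $H^{2}\subset H^{\al}$, contradicting $H^{\al}\subsetneq H^{2}$, so $\mathcal{M}=\{0\}$.

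The main technical point to verify is Minkowski's integral inequality for the continuous rotationally symmetric norm $\al$ applied to the map $w\mapsto g\chi_{\bar wA}$. This can be obtained through Riemann-sum approximation on $\T$ (where the triangle inequality is immediate for finite sums) followed by passing to the limit, using that $w\mapsto\chi_{\bar wA}$ is continuous in the $\al$-norm --- a consequence of the continuity property of $\al$ together with the standard fact that $m(\bar wA\,\triangle\,\bar{w'}A)\to 0$ as $w'\to w$.
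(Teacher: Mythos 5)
Your strategy is genuinely different from the paper's and its skeleton is sound: the paper assumes $\phi\neq 0$, deduces $\phi\in H^\infty$ from $\phi H^2\subset H^\al\subset H^2$, picks a specific $h\in H^2\setminus H^\al$, localizes to the set where $|\phi|$ is bounded below, and gets a contradiction directly from $\al(fg)\leq\|f\|_\infty\al(g)$; no closed graph theorem or averaging appears there. Your route (closed graph to get $\al(\phi g)\leq C\|g\|_2$, then rotation-averaging of the sets $\bar wA$ to remove $\phi$ and conclude $\al(g)\lesssim\|g\|_2$, hence $H^2\subset H^\al$) is more structural and in fact proves the stronger statement that a nonzero $\mathcal{M}$ forces $H^2\subseteq H^\al$. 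The closed graph step, the pointwise bound $|g_w\chi_A|\leq\delta^{-1}|\phi g_w|$, the identity $\int_\T\chi_A(wz)\,dm(w)=m(A)$, and the final Ces\`aro argument are all fine.

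The gap is in the justification of the Minkowski-type inequality, which you yourself flag as the main technical point. Continuity of $w\mapsto\chi_{\bar wA}$ in the $\al$-norm does \emph{not} give continuity of $w\mapsto g\chi_{\bar wA}$ for a general, unbounded $g\in H^2$: one has $\al\bigl(g(\chi_{\bar wA}-\chi_{\bar w'A})\bigr)=\al\bigl(g\chi_{\bar wA\,\triangle\,\bar w'A}\bigr)$, and the only available bound is $\al(g\chi_{\bar wA})+\al(g\chi_{\bar w'A})\leq 2C\|g\|_2/\delta$, which is bounded but not small; smallness would require an absolute-continuity property of the $\al$-norm of $g$ that is not known (indeed $g\in L^\al$ is exactly what you are trying to prove, and a priori $g\chi_{\bar wA}$ lies only in $\mathcal{L}^\al$, so the Riemann-sum/Bochner integration in $(L^\al,\al)$ is not even set up). The gap is repairable in at least two ways. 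The cheapest: run the averaging argument only for $g\in H^\infty$ --- then $\al\bigl(g(\chi_{\bar wA}-\chi_{\bar w'A})\bigr)\leq\|g\|_\infty\,\al(\chi_{\bar wA\,\triangle\,\bar w'A})\to 0$ by the continuity of $\al$, the map takes values in $L^\al$, the vector-valued integral is identified with $m(A)g$ via the continuous embedding $L^\al\subset L^1$, and the resulting inequality $\al(g)\leq C'\|g\|_2$ for bounded $g$ is all your final step needs, since you apply it only to differences of Ces\`aro means, which lie in $H^\infty$. Alternatively, one can prove a Fatou-type lower semicontinuity of the extended norm (if $f_N\to f$ a.e.\ and $\al(f_N)\leq M$ then $\al(f)\leq M$) directly from the simple-function definition of the extension together with the continuity of $\al$, and apply it to the discrete averages $\frac1N\sum_k g\chi_{\bar w_kA}$. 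With either repair your proof goes through.
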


\begin{proof}
Since $\mathcal{M}$ satisfies the assumptions of Theorem \ref{thm4.1}, there exists $\phi \in H^2 \cap H^\al$ such that 
\[ \mathcal{M} = \phi H^2.\]

We claim that $\phi = 0$.\\
Let, if possible, $\phi \neq 0$.
The computations in the proof of Theorem \ref{thm4.1} show that $\phi$ multiplies $H^2$ into $H^\al$. Also, by the additional condition that $H^\al \subsetneq H^2$, we have $\phi H^2 \subset H^2$. This implies that $\phi \in H^\infty$.\\
For a fixed $n \geq 1$, define
\[ E_n = \left\{ e^{i\theta} : \lvert \phi(e^{i \theta}) \rvert > \frac{1}{n}\right\}.\]
Then
\[E_n^c = \left\{ e^{i\theta} : \lvert \phi(e^{i \theta}) \rvert \leq \frac{1}{n}\right\}.\]
Since $0 \neq \phi \in H^2$, $\phi$ cannot vanish on a set of Lebesgue measure zero, it follows that $m(\cap_{n=1}^\infty E_n^c)=0$. So, $m(E_n^c) \to 0$ as $n \to \infty$. Clearly, $\chi_{E_n^c} \to 0~a.e.$ and hence $\chi_{E_n} \to 1~a.e.$\\
Now for any $\epsilon > 0$, we can find $N_0 \in \N$ such that
\begin{equation}\label{eq4.6}
    1-\epsilon \leq \chi_{E_{N_0}} \leq 1+\epsilon~a.e.
\end{equation}

Also, $H^\al \subsetneq H^2$, there exists $h\in H^2$ and $h \notin H^\al$.
Define $g = \chi_{E_{N_0}} h $, then $g \in L^2$. We claim that $g \notin L^\al$.  \\
For if, $g \in L^\al$ then from (\ref{eq4.6}) we have, $\lvert (1-\epsilon) h \rvert \leq \lvert \chi_{E_{N_0}}h\rvert = \lvert g \rvert  ~a.e.$, this forces $h \in L^\al$. This is a contradiction.\\
Further, $\phi g = \chi_{E_{N_0}} \phi h$ belongs to $L^\al$ because $\phi h$ belongs to $H^\al$. Since $(1-\epsilon) \lvert \phi \rvert ~\leq~ \lvert \chi_{E_{N_0}} \phi \rvert ~a.e.$, we conclude that $\lvert \chi_{E_{N_0}} \phi \rvert$ is invertible on $\T$ except possibly on a set of measure zero.\\
Therefore, 
\begin{equation}\label{eq4.7}
\begin{split}
    \left \lvert \dfrac{1}{\chi_{E_{N_0}} \phi} \right \rvert &\leq \left \lvert \dfrac{1}{(1-\epsilon) \phi} \right \rvert ~~a.e.\\
    &\leq \dfrac{N_0}{1-\epsilon}~~a.e.
\end{split}
\end{equation}
In view of the invertibility of $\lvert \chi_{E_{N_0}} \phi \rvert$, we can 
write $ g  =  \dfrac{\chi_{E_{N_0}} \phi g}{\chi_{E_{N_0}} \phi}$, and hence
$$\lvert g \rvert \leq \dfrac{N_0}{1-\epsilon} \lvert \chi_{E_{N_0}} \phi g \rvert~a.e.$$
Therefore, $\al(g) \leq \dfrac{N_0}{1-\epsilon} \|\chi_{E_{N_0}}\|_\infty \al(\phi g) < \infty$ and $g \in L^\al$, which contradicts the fact that $g \notin L^\al$. This contradiction stems from the assumption that $\phi \neq 0$.
\end{proof}

\begin{rem}
An example of a continuous rotationally symmetric norm $\alpha$ for which $H^\al$ is contained in $H^2$ is provided in \cite{Pascal2009Compact}. In fact, \cite{Pascal2009Compact} constructs an example of $H^\al$ which is contained in all $H^p$ spaces for $1\le p<\infty$.
\end{rem}

We end this section with a corollary which is an immediate consequence of Theorems \ref{thm4.1} and \ref{thm4.2}. 

\begin{cor}(\cite{Agrawal1995deBranges})\label{cor4.4}
Let $\mathcal{M}$ be a Hilbert space which is algebraically contained in $H^p$ for any $1 \leq p \leq \infty$. Further assume that the operator $T_z$ acts as an isometry on $\mathcal{M}$ and $T_z \mathcal{M} \subset \mathcal{M}$. Then
\[ \mathcal{M} = b H^2\]
for a unique $b$:\\
(1) If $1 \leq p \leq 2$, $b \in H^{2p/2-p}$ and  $\|b f\|_M = \|f\|_{H^2}$ for all $f\in H^2$.\\
(2) If $p > 2$, b=0.
\end{cor}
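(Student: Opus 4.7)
The plan is to split on $p$ and invoke one of the two preceding theorems with $\alpha = \|\cdot\|_q$ for a suitable finite $q$. Recall that $\|\cdot\|_q$ is a continuous rotationally symmetric norm for every $1 \leq q < \infty$, so both Theorem~\ref{thm4.1} and Theorem~\ref{thm4.2} apply with such $\alpha$; the case $p=\infty$ will be absorbed into case (2) via the containment $H^\infty \subset H^q$ for any finite $q$.

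For part (2) ($2 < p \leq \infty$), I would pick any finite $q$ with $2 < q \leq p$ (for $p=\infty$, any $q \in (2,\infty)$). Then $\mathcal{M} \subset H^p \subset H^q$, and $H^q$ is properly contained in $H^2$, so the hypotheses of Theorem~\ref{thm4.2} are met; this forces $\mathcal{M} = \{0\}$, and $b=0$ is the unique choice.

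For part (1) ($1 \leq p \leq 2$), Theorem~\ref{thm4.1} applied with $\alpha = \|\cdot\|_p$ produces $\phi \in H^2 \cap H^p$ satisfying $\mathcal{M} = \phi H^2$ and $\|\phi g\|_{\mathcal{M}} = \|g\|_{H^2}$ for every $g \in H^2$. The remaining task is to upgrade the integrability of $\phi$ to $H^{2p/(2-p)}$ (which is $H^\infty$ when $p=2$ and $H^2$ when $p=1$). From $\phi H^2 = \mathcal{M} \subset H^p$ and the decomposition $L^2 = H^2 \oplus \overline{zH^2}$, writing an arbitrary $f \in L^2$ as $f_1 + \overline{z f_2}$ with $f_1, f_2 \in H^2$ and using $|\phi \overline{z f_2}| = |\phi f_2|$, one sees that $\phi \cdot L^2 \subset L^p$. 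Applying the converse of H\"older's inequality \cite{Leach1956holder's} (already invoked in the proof of Theorem~\ref{thm4.2}) with the conjugate index $q = 2p/(2-p)$ determined by $\tfrac{1}{p} = \tfrac{1}{q} + \tfrac{1}{2}$ gives $\phi \in L^q$; combined with $\phi \in H^1$, this yields $\phi \in H^q$ as required, and the norm identity is already part of Theorem~\ref{thm4.1}.

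The main obstacle is the converse-H\"older step: extracting the sharp exponent $q$ from the multiplier property of $\phi$ and passing from $H^2 \to H^p$ to $L^2 \to L^p$ multiplication. The rest is routine assembly of the two preceding theorems, with the endpoint cases ($p=1,\,p=2$) handled uniformly by this framework. Uniqueness of $b$, up to a unimodular constant (the usual sense of uniqueness in Beurling-type statements), follows from the one-dimensionality of the wandering subspace $\mathcal{N}$ in the Wold decomposition underlying Theorem~\ref{thm4.1}.
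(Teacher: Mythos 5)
Your proposal is correct and follows the paper's intended route: the paper states Corollary~\ref{cor4.4} without a written proof, as an immediate consequence of Theorem~\ref{thm4.1} (applied with $\alpha=\|\cdot\|_p$ when $1\leq p\leq 2$) and Theorem~\ref{thm4.2} (applied, as you do, through a finite exponent $q\in(2,p]$ when $p>2$, which also absorbs $p=\infty$). The converse-H\"older multiplier step giving the sharp exponent $2p/(2-p)$ is the one detail the paper leaves implicit, and it is exactly the natural sharpening of the argument inside the proof of Theorem~\ref{thm4.1}, which already establishes $\phi\cdot L^2\subset\mathcal{L}^\alpha=L^p$ but invokes Leach's converse of H\"older's inequality only to conclude $\phi\in L^2$.
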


\begin{rem}
Note that the part (2) of Corollary \ref{cor4.4} is in line with the conclusion of Theorem \ref{thm4.2}.
\end{rem}

\nocite{Hoffman1962banach}
\bibliographystyle{spmpsci}
\bibliography{references.bib}

\begin{thebibliography}{10}
\providecommand{\url}[1]{{#1}}
\providecommand{\urlprefix}{URL }
\expandafter\ifx\csname urlstyle\endcsname\relax
  \providecommand{\doi}[1]{DOI~\discretionary{}{}{}#1}\else
  \providecommand{\doi}{DOI~\discretionary{}{}{}\begingroup
  \urlstyle{rm}\Url}\fi

\bibitem{Beurling1948}
Beurling, A.: On two problems concerning linear transformations in {H}ilbert
  space.
\newblock Acta Math. \textbf{81}, 239--255 (1948)

\bibitem{Chen2014Thesis}
Chen, Y.: Function spaces based on symmetric norms.
\newblock ProQuest LLC, Ann Arbor, MI (2014).
\newblock Thesis (Ph.D.)--University of New Hampshire

\bibitem{chen2017general}
Chen, Y.: A general {B}eurling-{H}elson-{L}owdenslager theorem on the disk.
\newblock Adv. in Appl. Math. \textbf{87}, 1--15 (2017)

\bibitem{Chen2014LebesgueAH}
Chen, Y.K.: Lebesgue and hardy spaces for symmetric norms i.
\newblock arXiv: Operator Algebras  (2014)

\bibitem{de2015square}
De~Branges, L., Rovnyak, J.: Square summable power series.
\newblock Courier Corporation, North Chelmsford (2015)

\bibitem{Mandrekar1988Polydisc}
Ghatage, P., Mandrekar, V.: On {B}eurling type invariant subspaces of
  {$L^2({\bf T}^2)$} and their equivalence.
\newblock J. Operator Theory \textbf{20}(1), 83--89 (1988)

\bibitem{Halmos1961Shifts}
Halmos, P.R.: Shifts on {H}ilbert spaces.
\newblock J. Reine Angew. Math. \textbf{208}, 102--112 (1961)

\bibitem{helson2017lectures}
Helson, H.: Lectures on invariant subspaces.
\newblock Academic Press, New York-London (1964)

\bibitem{invariant_subspaces_Helson}
Helson, H., Lowdenslager, D.: Invariant subspaces.
\newblock In: Proc. {I}nternat. {S}ympos. {L}inear {S}paces ({J}erusalem,
  1960), pp. 251--262. Jerusalem Academic Press, Jerusalem; Pergamon, Oxford
  (1961)

\bibitem{Hoffman1962banach}
Hoffman, K.: Banach spaces of analytic functions.
\newblock Prentice-Hall Series in Modern Analysis. Prentice-Hall, Inc.,
  Englewood Cliffs, N.J. (1962)

\bibitem{Koosis1998HpSpace}
Koosis, P.: Introduction to {$H_p$} spaces, \emph{Cambridge Tracts in
  Mathematics}, vol. 115, second edn.
\newblock Cambridge University Press, Cambridge (1998)

\bibitem{SnehLata2010finite}
Lata, S., Mittal, M., Singh, D.: A finite multiplicity
  {H}elson-{L}owdenslager-de {B}ranges theorem.
\newblock Studia Math. \textbf{200}(3), 247--266 (2010)

\bibitem{lax1959translation}
Lax, P.D.: Translation invariant spaces.
\newblock Acta Math. \textbf{101}, 163--178 (1959)

\bibitem{Leach1956holder's}
Leach, E.B.: On a converse of the {H}\"{o}lder inequality.
\newblock Proc. Amer. Math. Soc. \textbf{7}, 607--608 (1956)

\bibitem{Pascal2009Compact}
Lef\`evre, P., Li, D., Queff\'{e}lec, H., Rodr\'{\i}guez-Piazza, L.: Compact
  composition operators on {$H^2$} and {H}ardy-{O}rlicz spaces.
\newblock J. Math. Anal. Appl. \textbf{354}(1), 360--371 (2009)

\bibitem{Paulsen2001deBrangesL2}
Paulsen, V.I., Singh, D.: A {H}elson-{L}owdenslager-de {B}ranges theorem in
  {$L^2$}.
\newblock Proc. Amer. Math. Soc. \textbf{129}(4), 1097--1103 (2001)

\bibitem{Rao1991Theory}
Rao, M.M., Ren, Z.D.: Theory of {O}rlicz spaces, \emph{Monographs and Textbooks
  in Pure and Applied Mathematics}, vol. 146.
\newblock Marcel Dekker, Inc., New York (1991)

\bibitem{Redett2005Polydisc}
Redett, D.A.: ``{B}eurling type'' subspaces of {$L^p(\bold T^2)$} and
  {$H^p(\bold T^2)$}.
\newblock Proc. Amer. Math. Soc. \textbf{133}(4), 1151--1156 (2005)

\bibitem{Redett2005SubLebesgue}
Redett, D.A.: Sub-{L}ebesgue {H}ilbert spaces on the unit circle.
\newblock Bull. London Math. Soc. \textbf{37}(5), 793--800 (2005)

\bibitem{sahni2012lax}
Sahni, N., Singh, D.: Lax-{H}almos type theorems of {$H^p$} spaces.
\newblock Houston J. Math. \textbf{41}(2), 571--587 (2015)

\bibitem{singh2022multiplication}
Singh, A., Sahni, N.: Multiplication by finite {B}laschke factors on a general
  class of {H}ardy spaces.
\newblock Advances in Operator Theory \textbf{7}(4), 57 (2022)

\bibitem{Agrawal1995deBranges}
Singh, D., Agrawal, S.: de {B}ranges spaces contained in some {B}anach spaces
  of analytic functions.
\newblock Illinois J. Math. \textbf{39}(3), 351--357 (1995)

\bibitem{singh1997multiplication}
Singh, D., Thukral, V.: Multiplication by finite {B}laschke factors on de
  {B}ranges spaces.
\newblock J. Operator Theory \textbf{37}(2), 223--245 (1997)

\bibitem{UNSingh1991debranges}
Singh, U.N., Singh, D.: On a theorem of de {B}ranges.
\newblock Indian J. Math. \textbf{33}(1), 1--5 (1991)

\bibitem{Srinivasan1963simply}
Srinivasan, T.P.: Simply invariant subspaces.
\newblock Bull. Amer. Math. Soc. \textbf{69}, 706--709 (1963)

\bibitem{Srinivasan1964doubly}
Srinivasan, T.P.: Doubly invariant subspaces.
\newblock Pacific J. Math. \textbf{14}, 701--707 (1964)

\end{thebibliography}

\end{document}